\newtheorem{tw}{Theorem}
\newtheorem{lm}[tw]{Lemma}
\newtheorem{wn}[tw]{Corollary}
\newtheorem{pr}[tw]{Proposition}
\theoremstyle{definition}
\newtheorem{uw}{Remark}
\newcommand{\R}{\mathbb{R}}
\newcommand{\Z}{\mathbb{Z}}
\newcommand{\N}{\mathbb{N}}
\newcommand{\T}{\mathbb{T}}
\newcommand{\cS}{\mathcal{S}}
\newcommand{\C}{\mathbb{C}}
\newcommand{\Q}{\mathbb{Q}}
\newcommand{\cB}{\mathcal{B}}
\newcommand{\cC}{\mathcal{C}}
\newcommand{\cP}{\mathcal{P}}
\newcommand{\cT}{\mathcal{T}}
\newcommand{\cU}{\mathcal{U}}
\newcommand{\xbm}{(X,\mathcal{ B},\mu)}
\newcommand{\ycn}{(Y,\mathcal{ C},\nu)}
\newcommand{\vep}{\varepsilon}
\begin{document}
\date{\today}
\title{On the self-similarity problem for Gaussian-Kronecker flows}
\author{K. Fr\k{a}czek\and J. Ku\l aga\and M. Lema\'nczyk}
\address{Faculty of Mathematics and Computer Science, Nicolaus
Copernicus University, ul. Chopina 12/18, 87-100 Toru\'n, Poland}
\email{fraczek@mat.umk.pl, joanna.kulaga@gmail.com, mlem@mat.umk.pl}

\maketitle

\begin{abstract}
It is shown that a countable symmetric multiplicative subgroup
$G=-H\cup H$ with $H\subset\R_+^\ast$ is the group of
self-similarities of a Gaussian-Kronecker flow if and only if $H$
is  additively $\Q$-independent. In particular, a real number
$s\neq\pm1$ is a scale of self-similarity of a Gaussian-Kronecker
flow if and only if $s$ is transcendental. We also show that each
countable symmetric subgroup of $\R^\ast$  can be realized as the
group of self-similarities of a simple spectrum Gaussian flow
having the Foia\c{s}-Stratila property.
\end{abstract}

\section*{Introduction}

Assume that $\cT=(T_t)_{t\in\R}$ is a (measurable)
measure-preserving flow acting on a standard probability Borel
space $\xbm$. Given $s\in\R^\ast$, one says that it is a {\em
scale of self-similarity} of $\cT$ if $\cT$ is isomorphic to
$\cT_s:=(T_{st})_{t\in\R}$. Denote by $I(\cT)$ the set of all
scales of self-similarities of $\cT$. Then $\cT$ is called {\em
self-similar} if $I(\cT)\neq\{\pm1\}$. Classical examples of
self-similar flows are given by horocycle flows where $I(\cT)$
equals either $\R^\ast$ or $\R^\ast_+$ \cite{Ma}. A systematic
study of the problem of self-similarity has been done recently in
\cite{Da-Ry} and \cite{Fr-Le}. In particular,  $I(\cT)$  turns out
to be a multiplicative subgroup of $\R^\ast$ (\cite{Fr-Le}) which is
Borel (\cite{Da-Ry}), and one of the main problems in this domain is
to classify all Borel subgroups of $\R^\ast$ that may appear as
groups of self-similarities of ergodic flows; see also \cite{Da},
\cite{Ku}, \cite{Ry1}, \cite{Ry2} for a recent contribution to
other aspects of the problem of self-similarity of ergodic flows.
From this point of view the subclass of so called GAG flows
\cite{Le-Pa-Th}\footnote{In \cite{Le-Pa} as well as in
\cite{Le-Pa-Th} only Gaussian automorphisms are considered,
however all results can be rewritten for Gaussian flows.}  of the
class of Gaussian flows is especially attractive since
self-similarities appear there as natural invariants,
see~(\ref{ni}) below. By definition, GAG flows are those Gaussian
flows whose ergodic self-joinings remain Gaussian. All Gaussian flows
with simple spectrum are GAG flows \cite{Le-Pa-Th}. If
$\cT^\sigma=(T^\sigma_t)_{t\in\R}$ denotes the Gaussian flow
determined by a finite positive (continuous) measure $\sigma$ on
$\R_+$ and the flow is GAG then
\begin{equation}\label{ni}
\text{$I(\cT^\sigma)$ is equal to  the (multiplicative) group
$-I(\sigma)\cup I(\sigma)$},
\end{equation} where
$I(\sigma)=\{s\in\R_+^\ast:\: \sigma_s\equiv\sigma\}$ and
$\sigma_s=(R_s)_\ast(\sigma)$ denotes the image of $\sigma$ via
the map $R_s:x\mapsto sx$ \cite{Le-Pa-Th}. Recall that $-1$ is
always a scale of self-similarity for a Gaussian flow.

In this note we focus on the problem  of self-similarities in
some subclasses of simple spectrum Gaussian flows. We first
recall already  known results. Classically, if $\sigma$ is
concentrated on an additively $\Q$-independent Borel set
$A\subset\R_+$ then the Gaussian flow $\cT^\sigma$ has simple
spectrum, see \cite{Co-Fo-Si}. Moreover, the subgroup
$H:=I(\cT^\sigma)\cap\R^\ast_+$ is an additively $\Q$-independent
set. Indeed, suppose that $H$ is not an additively
$\Q$-independent set. That is, for some distinct
$h_1,\ldots,h_m\in H$ we have
\begin{equation}\label{km1}
\sum_{i=1}^m k_ih_i=0\quad \text{with}\quad
k_i\in\Z,\;i=1,\ldots,m\quad \text{and}\quad \sum_{i=1}^mk_i^2>0.
\end{equation}
Denote by $H_0\subset H$ the multiplicative subgroup generated by
$h_1,\ldots,h_m$. Since $H_0\subset I(\cT^\sigma)$, we have
$\sigma_{h}\equiv\sigma$ for $h\in H_0$, thus the Borel set
$B=\bigcap_{h\in H_0}hA$ has full $\sigma$-measure, is
$\Q$-independent, and is literally $H_0$-invariant. Take any
non-zero $x\in B$. Then the elements $h_ix\in B$, $i=1,\ldots,m$,
are distinct. Now,~(\ref{km1}) yields
\[
\sum_{i=1}^mk_i(h_ix)=x\sum_{i=1}^mk_ih_i=0,
\]
so $B$ is not independent, a contradiction.  On the other hand, in \cite{Fr-Le},
it is shown that whenever a countable group $H\subset \R^\ast_+$
satisfies:
\begin{equation}\label{mk2}\begin{array}{l}
\text{For each polynomial $P\in\Q[x_1,\ldots,x_m]$ if there is}\\
\text{a collection of distinct elements $h_1,\ldots, h_m$ in $H$
such that}\\ \text{$P(h_1,\ldots,h_m)=0$ then $P\equiv
0$,}\end{array}\end{equation} then there exists a probability
$\sigma$ concentrated on a Borel $\Q$-independent set such that
$I(\cT^\sigma)=-H\cup H$. It is not difficult to see that the
condition~(\ref{mk2}) is equivalent to saying that $H$ is an
additively $\Q$-independent set.

\begin{tw}[\cite{Fr-Le}]\label{km3}
Assume  that $G=-H\cup H$, where $H\subset \R^\ast_+$ is a
countable multiplicative subgroup. Then $G$ can be realized as
$I(\cT^\sigma)$ for a Gaussian flow whose spectral measure
$\sigma$ is concentrated on a Borel $\Q$-independent set if and
only if $H$ is an additively $\Q$-independent set.
\end{tw}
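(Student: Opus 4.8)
The plan is to establish the two implications separately, in both cases exploiting the translation (\ref{ni}): a Gaussian flow whose spectral measure is concentrated on a $\Q$-independent Borel set has simple spectrum, hence is GAG, so that $I(\cT^\sigma)=-I(\sigma)\cup I(\sigma)$ with $I(\sigma)=\{s\in\R_+^\ast:\sigma_s\equiv\sigma\}$. Realizing $G=-H\cup H$ as $I(\cT^\sigma)$ is thus equivalent to realizing $H$ as $I(\sigma)$ for some $\sigma$ carried by a Borel $\Q$-independent set, and I would phrase the whole argument at this level.

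For necessity, suppose such a $\sigma$ exists, concentrated on a $\Q$-independent Borel set $A$ with $I(\sigma)=H$. This is exactly the configuration analysed in the Introduction, and I would reproduce that argument: were $H$ not additively $\Q$-independent, a nontrivial relation as in (\ref{km1}) would exist; passing to the multiplicative group $H_0$ generated by the elements involved and forming $B=\bigcap_{h\in H_0}hA$, a set of full $\sigma$-measure that is $\Q$-independent and literally $H_0$-invariant, one picks a nonzero $x\in B$ and obtains distinct points $h_ix\in B$ satisfying the same integral relation, contradicting the $\Q$-independence of $B$. Hence $H$ must be additively $\Q$-independent.

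For sufficiency I would reduce to the realization theorem of \cite{Fr-Le}, which produces a $\sigma$ on a $\Q$-independent set with $I(\cT^\sigma)=-H\cup H$ as soon as $H$ satisfies the polynomial condition (\ref{mk2}); the remaining work is to derive (\ref{mk2}) from additive $\Q$-independence. The mechanism is that $H$ is a multiplicative group, so for distinct $h_1,\dots,h_m\in H$ every monomial value $h_1^{a_1}\cdots h_m^{a_m}$ again belongs to $H$, and a vanishing $P(h_1,\dots,h_m)=0$ becomes a $\Q$-linear relation among elements of $H$ to which additive independence can be applied. The point that requires genuine care, and which I expect to be the main obstacle, is that distinct monomials of $P$ may evaluate to the same element of $H$, because the $h_i$ can satisfy multiplicative relations such as $h_ih_j=h_k$; additive independence then only forces the coefficients to sum to zero over each fiber rather than to vanish individually. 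One must therefore apply the condition to the reduced form of $P$, in which distinct monomials represent distinct elements of $H$, and check that this reduced reading is precisely the one under which (\ref{mk2}) matches additive $\Q$-independence and is usable as the hypothesis of \cite{Fr-Le}. With this in place, \cite{Fr-Le} yields the desired $\sigma$ and $I(\cT^\sigma)=-H\cup H=G$.
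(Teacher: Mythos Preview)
Your proposal follows the paper's own approach exactly: the necessity argument is the one given verbatim in the Introduction, and sufficiency is obtained by invoking the realization result from \cite{Fr-Le} under condition~(\ref{mk2}) together with the claimed equivalence of~(\ref{mk2}) with additive $\Q$-independence of $H$.

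Your caution about this last equivalence is well placed. Read literally, (\ref{mk2}) asserts algebraic independence over $\Q$ of any finite set of distinct elements of $H$, which is strictly stronger than additive $\Q$-independence: for the cyclic group $H=\{e^n:n\in\Z\}$ the polynomial $P(x_1,x_2,x_3)=x_1x_2-x_3$ vanishes at $(e,e^2,e^3)$, yet $H$ is additively $\Q$-independent by the transcendence of $e$. The paper simply asserts the equivalence without further comment, and later (in the second proof of Theorem~\ref{km10}) invokes the construction of \cite{Fr-Le} directly under the hypothesis of additive $\Q$-independence. So your plan---to go back to \cite{Fr-Le} and verify that what is actually used there is the ``reduced'' reading, i.e.\ that any $\Q$-linear relation among \emph{distinct} elements of $H$ is trivial---is the right way to close the loop, and the present paper tacitly relies on exactly this.
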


Note that for $H$ cyclic generated by $s\in\R_+$, the
$\Q$-independence condition   is equivalent to saying that $s$ is
transcendental. Hence, by Theorem~\ref{km3}, a real number $s$ can
be realized as a scale of self-similarity of a Gaussian flow
whose spectral measure is concentrated on a $\Q$-independent Borel
set if and only if $s$ is transcendental.

On the other hand, there are no restrictions on $H$ in the class
of all Gaussian flows having simple spectrum.

\begin{tw}[\cite{Da-Ry}]\label{sw}
For  each countable subgroup $H\subset\R^\ast_+$ there exists a
simple spectrum Gaussian flow $\cT^\sigma$ such that
$I(\cT^\sigma)=-H\cup H$.
\end{tw}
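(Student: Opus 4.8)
The plan is to reduce the statement to the construction of a single finite positive continuous measure $\sigma$ on $\R_+$ enjoying two properties: that the Gaussian flow $\cT^\sigma$ has simple spectrum, and that $I(\sigma)=\{s\in\R_+^\ast:\sigma_s\equiv\sigma\}$ equals $H$. Once such a $\sigma$ is in hand, simple spectrum forces $\cT^\sigma$ to be GAG, so formula~(\ref{ni}) applies and gives $I(\cT^\sigma)=-I(\sigma)\cup I(\sigma)=-H\cup H$, which is exactly what is wanted. In this way the dynamical problem is recast as a purely measure-theoretic one on $\R_+$: prescribe the \emph{exact} multiplicative quasi-invariance group $H$ while retaining simple spectrum.

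To control simple spectrum I would use the Wiener chaos decomposition $L^2_0=\bigoplus_{n\geq1}\mathcal{H}_n$, in which the maximal spectral type of $\cT^\sigma$ on the $n$-th chaos is the symmetrized $n$-fold convolution $\tilde\sigma^{*n}$ of the symmetric extension $\tilde\sigma$ of $\sigma$ to $\R$. Simple spectrum of $\cT^\sigma$ is then equivalent to two requirements: the measures $\tilde\sigma^{*n}$, $n\geq1$, are pairwise mutually singular, and each carries multiplicity one, the latter amounting to $\tilde\sigma^{*n}$-almost everywhere unique additive representations $x_1+\dots+x_n=y_1+\dots+y_n\Rightarrow\{x_i\}=\{y_i\}$ in the support. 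In Theorem~\ref{km3} both conditions come for free from additive $\Q$-independence of the support, but here that route is blocked: as the Introduction shows, realizing $I(\sigma)=H$ with $H$ not additively $\Q$-independent is incompatible with a $\Q$-independent support.

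For the construction I would pass to logarithmic coordinates, so that the multiplicative action of $H$ on $\R_+^\ast$ becomes the additive action of the countable subgroup $L=\log H$ on $\R$, and multiplicative quasi-invariance of $\sigma$ becomes translation quasi-invariance of $\nu=\log_\ast\sigma$ under $L$. I would fix a Borel fundamental domain $D$ for the $L$-action and build $\nu$ by spreading a single, carefully chosen transverse measure over all $L$-translates of $D$ with summable weights, so that $\nu$ is equivalent to each of its $L$-translates and to none outside $L$; this yields $I(\sigma)\supseteq H$ by construction and $I(\sigma)\subseteq H$ via a genericity argument on $D$. The transverse measure would be taken lacunary, of Cantor type (or produced by a random construction), engineered so that, after transport back to $\R_+$, the sumset geometry of the support avoids every additive coincidence except those forced by the $L$-symmetry, delivering mutual singularity and multiplicity one at every chaos level.

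The main obstacle is precisely this last reconciliation: the rigid multiplicative symmetry imposed by $H$ versus the additive independence that simple spectrum demands. When $H$ contains algebraic numbers---already when $2\in H$, so that the support must be, up to measure class, invariant under doubling---additive relations are unavoidable, and one cannot invoke $\Q$-independence as in Theorem~\ref{km3}. The heart of the argument is therefore a combinatorial-geometric estimate showing that a sufficiently sparse transverse structure keeps the convolutions $\tilde\sigma^{*n}$ mutually singular and simple, so that the forced multiplicative relations do not propagate into spurious additive coincidences. Confirming that exactly the elements of $H$, and no others, preserve the measure class of $\sigma$ then closes the identification $I(\cT^\sigma)=-H\cup H$.
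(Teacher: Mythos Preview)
Theorem~\ref{sw} is quoted from Danilenko--Ryzhikov and is not re-proved here, but the paper records their mechanism in Lemma~\ref{joa} and deploys it in the proof of the stronger Theorem~\ref{fs2}. That route is quite different from yours. One does \emph{not} attempt to control the sumset geometry of the convolutions $\tilde\sigma^{*n}$ directly. Instead, one first produces a seed measure $\nu\in\cP_c(\R)$ lying in the intersection of two dense $G_\delta$ sets. The first (Lemma~\ref{joa}) consists of those $\nu$ for which the characters $\xi_{t}$ in $L^2(\R,\nu)$ exhibit the weak-limit behaviours a$'$), b$'$), c$'$): these are the spectral shadows of a rank-$1$ infinite-measure flow engineered so that $T_{\sqrt{2}s}\in{\rm WCP}(T_s)$, $\tfrac{1}{q}I+\tfrac{q-1}{q}T_s\in{\rm WCP}(T_s)$, and tailored partial rigidities hold along every finite subset of $H$. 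By the argument of Theorem~4.4 in \cite{Da-Ry}, these conditions on $\nu$ already force the Gaussian flow built from $\sigma=\sum_{h\in H}a_h\nu_h$ to have simple spectrum, with no analysis of chaos-by-chaos convolutions required. The second $G_\delta$ (Lemma~\ref{sw11}) consists of $\nu$ with $\nu_s\perp\nu\ast\delta_t$ for every $s\neq1$ and $t\in\R$; this yields $I(\sigma)=H$ and, via Theorem~\ref{lpt}, even disjointness of $\cT^\sigma$ from $\cT^{\sigma_s}$ for $s\notin -H\cup H$. Intersecting the two $G_\delta$'s and taking $\sigma=\sum_h a_h\nu_h$ finishes.

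Your plan is a genuinely different, more constructive attack, and the reductions you make (simple spectrum $\Rightarrow$ GAG $\Rightarrow$~(\ref{ni}); passage to logarithmic coordinates; writing $\sigma$ as an $H$-spread of a transverse seed) are all sound and in fact match the paper's shape $\sigma=\sum_h a_h\nu_h$. The gap is exactly where you place it, and it is a real one. When $2\in H$, the support carries both a copy of the seed $K$ and of $2K$; then $K+K$ and $2K$ occupy the same region of $\R$, and among the infinitely many singularity and multiplicity-one conditions you must verify across all chaoses and all finite products from $H$ sits, for instance, $(R_2)_\ast\nu\perp\nu\ast\nu$. No lacunarity or Cantor-type sparsity of the seed is known to deliver \emph{all} such conditions simultaneously by a direct sumset argument, and your sentence ``a sufficiently sparse transverse structure keeps the convolutions $\tilde\sigma^{*n}$ mutually singular and simple'' is the missing theorem, not a step one may take for granted. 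The Danilenko--Ryzhikov approach sidesteps this entirely: simple spectrum comes from weak operator limits that propagate through the Gaussian functor, so the additive coincidences forced by algebraic elements of $H$ never have to be confronted head-on.
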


Note that, in particular, the above result of Danilenko and
Ryzhikov brings the positive answer to the open problem \cite{Le}
of existence of Gaussian flows $\cT^\sigma$ with simple spectrum
such that $\sigma$ is not concentrated on a $\Q$-independent set;
indeed, whenever $H$ is not an additively $\Q$-independent set, by
Theorem~\ref{km3}, the spectral measure $\sigma$ resulting from
Theorem~\ref{sw} cannot be concentrated on a Borel
$\Q$-independent set. See also \cite{Da} for constructions  of
Gaussian flows with zero entropy and having uncountable groups of
self-similarities.

Our aim is to continue investigations on realization of countable
subgroups as the groups of self-similarities in further restricted
classes of Gaussian flows whose spectral measures are classical
from the harmonic analysis point of view.  Recall some basic
notions. For every $s\in\R$ let  $\xi_s:\R\to {\mathbf S^1}$ be
given by $\xi_s(t)=\exp(2\pi i st)$. A finite positive Borel
measure $\sigma$ on $\R$ is called Kronecker if for each $f\in
L^2(\R,\sigma)$, $|f|=1$ $\sigma$-a.e., there exists a sequence
$(t_n)\subset\R$, $t_n\to \infty$, such that
\begin{equation}\label{kro1}
\xi_{t_n}\to f\quad \text{in}\quad L^2(\R,\sigma).
\end{equation}
Each measure $\sigma$ concentrated on a Kronecker set  \cite{Ko},
\cite{Li-Po} is a Kronecker measure. Indeed, Kronecker sets are
compact subsets of $\R$ on which each continuous function of
modulus one is a uniform limit of characters. Kronecker sets are
examples of $\Q$-independent sets \cite{Li-Po}. In general, as
shown in \cite{Le-Pa}, a Kronecker measure is concentrated on a
Borel set which is the union of an increasing sequence of
Kronecker sets, hence a Kronecker measure is concentrated on a
Borel $\Q$-independent set, and the restriction on $H$ in
Theorem~\ref{km3} applies.  This turns out to be the only
restriction as the main result of the note shows.

\begin{tw}\label{km10}
Assume that $G=-H\cup H$, where $H\subset \R^\ast_+$ is a
countable multiplicative subgroup. Then $G$ can be realized as
$I(\cT^\sigma)$~\footnote{In a sense, we can also control the
flows $\cT^{\sigma_s}$ for $s\notin-H\cup H$; we will prove their
disjointness from $\cT^\sigma$, see the proof of this theorem.}
for a Gaussian-Kronecker flow if and only if $H$ is an additively
$\Q$-independent set. In particular, $h\in\R_+$ can be a scale of
self-similarity for a Gaussian-Kronecker flow if and only if $h$
is transcendental.
\end{tw}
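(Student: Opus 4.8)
The \emph{only if} direction follows immediately from facts already recalled. If $\cT^\sigma$ is a Gaussian-Kronecker flow realizing $G=I(\cT^\sigma)$, then $\sigma$ is a Kronecker measure, hence, as noted in the Introduction following \cite{Le-Pa}, concentrated on a Borel set which is an increasing union of Kronecker sets, and in particular on a Borel $\Q$-independent set. The necessity part of Theorem~\ref{km3}, equivalently the computation built on~(\ref{km1}), then forces $H=I(\cT^\sigma)\cap\R_+^\ast$ to be additively $\Q$-independent.

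For the \emph{if} direction assume $H$ is additively $\Q$-independent. Since a measure concentrated on a $\Q$-independent Borel set yields a simple spectrum Gaussian flow \cite{Co-Fo-Si}, such a flow is GAG and~(\ref{ni}) applies; so the plan is to produce a \emph{Kronecker} measure $\sigma$ on $\R_+^\ast$ with
\[
I(\sigma)=\{s\in\R_+^\ast:\ \sigma_s\equiv\sigma\}=H .
\]
I would in fact aim at the sharper statement announced in the footnote, namely $\sigma_s\perp\sigma$ for every $s\in\R_+^\ast\setminus H$, which simultaneously pins down $I(\sigma)=H$ and yields disjointness of $\cT^{\sigma_s}$ from $\cT^\sigma$. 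The construction starts from a single Kronecker set: fix a Kronecker set $L\subset[1,2]$ and a continuous probability $\mu$ on it. Because $\xi_t(cx)=\xi_{ct}(x)$, every dilate $hL$ is again Kronecker and $\mu_h=(R_h)_\ast\mu$ is Kronecker on $hL$. I then set $\sigma=\sum_{h\in H}c_h\,\mu_h$, where the weights $c_h>0$ are chosen summable and with $c_{hh'}/c_h$ bounded above and below for each fixed $h'$; such weights exist since $H$ is a countable abelian group (take $c_h=2^{-\ell(h)}$ for a proper, symmetric, subadditive height function $\ell$ on $H$). Re-indexing the defining sum then gives $\sigma_{h'}\equiv\sigma$ for all $h'\in H$, so $H\subseteq I(\sigma)$, provided the dilates $\{hL\}_{h\in H}$ are pairwise $\sigma$-essentially disjoint.

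The measure $\sigma$ is supported on the $H$-invariant set $E=\bigcup_{h\in H}hL$ and is concentrated on the increasing union $K_n=\bigcup_{i\le n}h_iL$ for an enumeration $H=\{h_1,h_2,\dots\}$. The first real difficulty is that a finite union of Kronecker sets need not be Kronecker, and here the hypothesis enters: I would choose $L$ generically and exploit the additive $\Q$-independence of $H$ to place the dilates $h_1L,\dots,h_nL$ in sufficiently dissociated position that each $K_n$ is a Kronecker set, so that $\sigma$ becomes a Kronecker measure concentrated on $\bigcup_n K_n$. Concretely this should be carried out by a Cantor-type inductive scheme, where at stage $n$ one adds digits that push the approximation~(\ref{kro1}) forward on $K_n$ while keeping the enlarged set $\Q$-independent.

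The main obstacle I expect is the reverse inclusion $I(\sigma)\subseteq H$, i.e. excluding accidental self-similarities. If $\sigma_s\equiv\sigma$ then $sE$ and $E$ coincide up to $\sigma$-null sets, so for $\sigma$-typical $z$ one has both $z=hx$ and $z=sh'y$ with $x,y\in L$, $h,h'\in H$, whence $s=(h/h')(x/y)$. The hard part will be to make the final measure \emph{structurally rigid}: the dilation stabilizer of its measure class should be exactly $H$. I would secure this inside the same inductive scheme, tuning the multiscale digit structure of $L$ so that, using the additive $\Q$-independence of $H$, no dilation by $s\notin H$ can match the arrangement of the dilates $\{hL\}$ up to null sets; a Steinhaus--Pettis argument applied to the Borel subgroup $\log I(\sigma)\subseteq\R$ rules out a positive-measure group of extra symmetries, but the genuine work is the combinatorial rigidity of the digits, which must be reconciled with both the Kronecker property and $\Q$-independence. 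The same analysis upgrades $\sigma_s\not\equiv\sigma$ to $\sigma_s\perp\sigma$ for $s\notin H$, giving the footnote's disjointness, and~(\ref{ni}) finally converts $I(\sigma)=H$ into $I(\cT^\sigma)=-H\cup H=G$.
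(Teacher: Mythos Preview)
Your outline correctly isolates the two obstacles---making the dilated sum Kronecker, and excluding extra self-similarities---but leaves both unresolved: phrases like ``I would choose $L$ generically,'' ``this should be carried out by a Cantor-type inductive scheme,'' and ``I would secure this inside the same inductive scheme'' are promissory, not arguments. There is also a concrete error at the end: $\sigma_s\perp\sigma$ does \emph{not} by itself give disjointness of the Gaussian flows $\cT^\sigma$ and $\cT^{\sigma_s}$; by Theorem~\ref{lpt} one needs the much stronger condition $\widetilde{\sigma}_s\perp\widetilde{\sigma}\ast\delta_t$ for every $t\in\R$, and this is what the paper actually secures. (Incidentally, the bounded-ratio requirement on the weights $c_h$ is unnecessary: once the supports $hL$ are pairwise disjoint, any strictly positive summable weights give $\sigma_{h'}\equiv\sigma$.)

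The paper bypasses the explicit Cantor scheme entirely via Baire category in $\cP([a,b])$. Lemma~\ref{km100} shows that for any finite $\Q$-independent set $\{h_0,\dots,h_m\}$ the set of $\sigma$ for which $\frac{1}{m+1}\sum_{j=0}^m\sigma_{h_j}$ approximates a given unimodular function by characters is open and dense; density comes from approximating by discrete measures whose atoms, together with all their $h_j$-dilates, form a $\Q$-independent set---this is precisely where the hypothesis on $H$ enters. Intersecting over all such data yields Lemma~\ref{km101}: for a generic $\sigma$, the measure $\eta=\sum_{h\in H} a_h\sigma_h$ is Kronecker. Independently, Lemma~\ref{sw11} shows that the set of $\sigma$ with $\sigma_s\perp\sigma\ast\delta_t$ for all $s\neq1$ and all $t\in\R$ is also $G_\delta$ dense (density obtained from the maximal spectral type of a rank-one flow built in \cite{Da-Ry}). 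A generic $\sigma$ therefore enjoys both properties, and then $\widetilde{\eta}_s\perp\widetilde{\eta}\ast\delta_t$ for $s\notin -H\cup H$ and all $t$ follows, yielding disjointness via Theorem~\ref{lpt} and in particular $I(\cT^\eta)=-H\cup H$. The paper's second proof is closer in spirit to your plan: it starts from the perfect compact set $K$ constructed in \cite{Fr-Le}, for which $\bigcup_{h\in H}hK$ is $\Q$-independent and $(r\widetilde K+t)\cap\widetilde K$ is countable whenever $|r|\notin H$; the latter property replaces Lemma~\ref{sw11} in ruling out extra self-similarities. But even there the Kronecker property of $\eta$ is obtained by a category argument (Lemma~\ref{km102}) on $\cP(K)$, not by an explicit inductive construction.
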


An extremal  case when two dynamical systems are non-isomorphic is
the disjointness in the Furstenberg sense \cite{Fu}, see also
\cite{Gl}, \cite{Ka-Th}, \cite{Le}, \cite{delaRu} for disjointness
results in ergodic theory. We would like also to emphasize that
the notion of disjointness turned out to be quite meaningful in
the problem of non-correlation with the M\"obius function of
sequences of dynamical origin \cite{Bo-Sa-Zi}: we need that an
automorphism $T$ has the property that $T^p$ and $T^q$ are
disjoint for any two different primes. In connection with
that we will prove the following.

\begin{tw}\label{mkj}
Assume that $\cT^\sigma=(T^\sigma_t)_{t\in\R}$ is a
Gaussian-Kronecker flow. If $s\in\Q\setminus\{\pm1\}$ then
$T^\sigma_s$ is disjoint from $T^\sigma_1$. For every
Gaussian-Kronecker automorphism $T:\xbm\to\xbm$ the iterations
$T^n$, $T^m$  are disjoint for any two distinct natural numbers $n, m$.


If $s$ is irrational then there exists a Gaussian-Kronecker flow
$\cT^\sigma$ such that $T^\sigma_s$ and $T^\sigma_1$ have a
non-trivial common factor.
\end{tw}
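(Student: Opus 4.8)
The plan is to construct $\sigma$, concentrated on a Kronecker set, for which the Gaussian automorphisms $T^\sigma_1$ and $T^\sigma_s$ share a common nonzero symmetric spectral component on the first chaos, and then to promote this overlap into a common Gaussian factor. Recall that for a Gaussian flow the Koopman operator of $T^\sigma_t$ restricted to the (complexified) first chaos has maximal spectral type equal to the image of the symmetrized measure $\sigma$ on $\R$ under the map $p_t\colon x\mapsto e^{2\pi i tx}\in\mathbf{S}^1$; in particular the first-chaos spectral measure of $T^\sigma_1$ is $\mu:=(p_1)_\ast\sigma$ and that of $T^\sigma_s$ is $\mu_s:=(p_s)_\ast\sigma=(p_1)_\ast\sigma_s$, in accordance with $T^\sigma_s\cong T^{\sigma_s}_1$. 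By the Foia\c{s}--Stratila/GAG description of Gaussian factors, any nonzero symmetric (conjugation-invariant) continuous measure $\rho$ with $\rho\ll\mu$ and $\rho\ll\mu_s$ yields a nontrivial Gaussian automorphism $G_\rho$, of spectral type $\rho$, that is simultaneously a factor of $T^\sigma_1$ and of $T^\sigma_s$; this $G_\rho$ is the sought common factor. Thus it suffices to build $\sigma$ on a Kronecker set so that $\mu$ and $\mu_s$ genuinely overlap.

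For the matching I would take a perfect set $P\subset(0,1)$ of diameter $<1$, set $K_2=\tfrac1s P$, and define the symmetrized $\sigma$ by placing a continuous Kronecker measure $\sigma_1$ on $P$, its transport $(R_{1/s})_\ast\sigma_1$ on $K_2$, together with the reflected copies on $-P$ and $-K_2$. Since $p_s\circ R_{1/s}=p_1$, the image under $p_s$ of the part of $\sigma_1$ carried by $K_2$ coincides with the image under $p_1$ of the part carried by $P$. Hence $\rho:=(p_1)_\ast\bigl(\sigma|_{P\cup(-P)}\bigr)$ satisfies $\rho\le\mu$ and $\rho\le\mu_s$, is symmetric, and (as $p_1$ is injective on $P$ and $\sigma_1$ is continuous) is nonzero and continuous, which provides exactly the required common component.

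The only delicate point, which I regard as the main obstacle, is that the support $P\cup\tfrac1s P$ (symmetrized) must lie inside a single Kronecker set, so that $\cT^\sigma$ is genuinely Gaussian-Kronecker. I would produce $P$ by the classical inductive Cantor-type construction of Kronecker sets, but strengthen the inductive hypothesis: at each stage the finite approximating set $\{x_1,\dots,x_k\}\subset P$ is required to be linearly independent over the countable field $\Q(s)$, not merely over $\Q$. A rational relation among the points of $\{x_i\}\cup\{x_i/s\}$ has the form $\sum_i(a_i+b_i/s)x_i=0$, and $\Q(s)$-independence of the $x_i$ together with $s\notin\Q$ forces every $a_i,b_i$ to vanish; thus $\{x_i\}\cup\{x_i/s\}$ is $\Q$-independent, and Kronecker's approximation theorem allows a single character $\xi_t$ to approximate simultaneously arbitrary prescribed phases on $P$ and on $\tfrac1s P$. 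The strengthening is feasible because at each step the values that would destroy $\Q(s)$-independence form a countable, hence nowhere dense, set, leaving ample room to choose the new points. This is precisely where the hypothesis that $s$ is irrational enters: for $s\in\Q$ the set $P\cup\tfrac1s P$ can never be $\Q$-independent, so no such Kronecker realization exists, in agreement with the first part of the theorem.
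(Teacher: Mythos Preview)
Your proposal addresses only the final assertion (the irrational case) and leaves the first two parts of the theorem unproved. For the rational case the paper relies on a separate lemma (Lemma~\ref{krzys}): if $\sigma$ is a continuous Kronecker measure then $\sigma\perp\sigma_s\ast\delta_r$ for every $s\in\Q^\ast\setminus\{1\}$ and every $r\in\R$. This singularity, fed into the GAG disjointness criterion (Theorem~\ref{lpt}), yields $\cT^\sigma\perp\cT^{\sigma_s}$, and then $T^\sigma_1\perp T^\sigma_s$ via~(\ref{disj}). The automorphism statement is handled by lifting the Kronecker measure $\sigma_0\in\cP(\T)$ to a Kronecker measure $\sigma'$ on $[0,1)$ and identifying the given automorphism with $T^{\sigma'}_1$, so the flow result applies. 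Your closing remark that $P\cup\tfrac1s P$ cannot be $\Q$-independent for rational $s$ is correct but only explains why your construction breaks down; it does not supply the disjointness.

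For the irrational case your argument is correct and is, in essence, the paper's argument carried out by hand. The paper stays at the flow level: since $\{1,s\}$ is $\Q$-independent, the Baire category Lemma~\ref{km101} gives a continuous $\sigma\in\cP([a,b])$ for which $\eta:=\tfrac12(\sigma+\sigma_s)$ is Kronecker; then $\sigma_s\ll\eta$ and $\sigma_s\ll\eta_s$ immediately exhibit $\cT^{\sigma_s}$ as a common Gaussian factor of $\cT^\eta$ and $\cT^{\eta_s}$, and one takes the time-$1$ map. There is no need to project to the circle. Your $\Q(s)$-independence device is precisely the content of the algebraic-variety step in Lemma~\ref{km100} specialized to $\{h_0,h_1\}=\{1,s\}$, and your measure $\sigma_1+(R_{1/s})_\ast\sigma_1$ plays the role of the paper's $\eta$. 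What your explicit Cantor route would still need spelled out is the standard interleaving with a dense family of unimodular functions, so that the limit set $P\cup\tfrac1s P$ is genuinely Kronecker and not merely $\Q$-independent; the paper's category argument sidesteps this.
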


An importance of  Kronecker measures in ergodic theory follows from the following remarkable result of Foia\c{s} and Stratila
\cite{Fo-St} (see also \cite{Co-Fo-Si}, and  remarks on that
result in \cite{Le-Pa} and~\cite{Pa}):
\begin{equation}\label{fs1}
\begin{array}{l}
\text{If $(S_t)_{t\in \R}$ is an ergodic flow of a standard probability}\\
\text{Borel space $\ycn$, $f\in L^2\ycn$ is real and the spectral measure}\\
\text{$\sigma_f$ of $f$ is the symmetrization of a Kronecker measure,}\\
\text{then the (stationary) process $(f\circ S_t)_{t\in\R}$ is
Gaussian.}
\end{array}
\end{equation}
In \cite{Le-Pa}, any measure $\sigma$ satisfying the
assertion~(\ref{fs1}) of Foia\c{s}-Stratila theorem is called an
FS measure. Each Kronecker measure is a Dirichlet
measure\footnote{A probability Borel measure $\sigma$ on $\R$ is
Dirichlet, if (\ref{kro1}) is satisfied for $f=1$. From the
dynamical point of view, Dirichlet measures correspond to
rigidity: a flow $\cT$ is {\em rigid} if $T^{t_n}\to Id$ for some
$t_n\to\infty$.} \cite{Li-Po}, but as shown in \cite{Le-Pa}, there
are FS measures which are not Dirichlet measures (see Figure~\ref{fig1}).
\begin{figure}[ht]
\centering
\includegraphics[height=80pt]{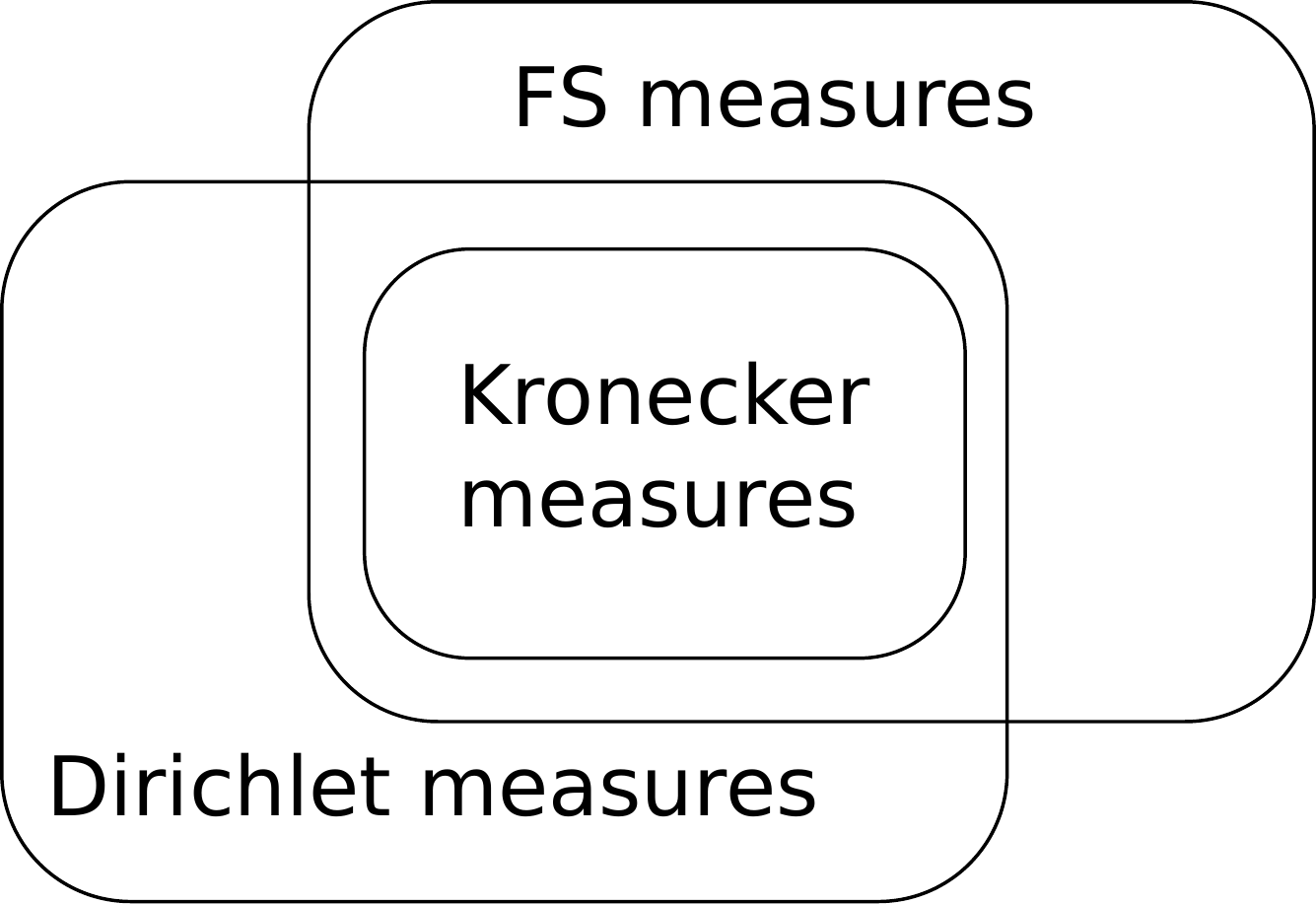}
\caption{Different classes of measures}
\label{fig1}
\end{figure}
Moreover, in
\cite{Pa}, it is announced that each continuous measure
concentrated on an independent Helson\footnote{$A\subset\R$ is
called Helson if for some $\delta>0$ and each complex Borel
measure $\kappa$ concentrated on $A$ the
$\sup_{t\in\R}\left|\int_{\R}e^{2\pi itx}\,d\kappa(x)\right|$ is
bounded away from the $\delta$-fraction of the total variation of
$\kappa$.} set is a Kronecker measure (for some examples in
\cite{Pa}, the resulting Gaussian flows have no non-trivial rigid
factors). We will strenghten Theorem~\ref{sw} to the following
result.

\begin{tw}\label{fs2}
Any symmetric countable group $G\subset \R^\ast$ can be  realized
as the group of self-similarities of a simple spectrum Gaussian
flow $\cT^\sigma$ with $\sigma$ being an FS measure.
\end{tw}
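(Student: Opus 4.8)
The plan for Theorem~\ref{fs2} is to combine the group-realization machinery behind Theorem~\ref{sw} with an explicit construction forcing the spectral measure to be an FS measure. Theorem~\ref{sw} already produces, for any countable $H\subset\R^\ast_+$, a simple-spectrum Gaussian flow $\cT^\sigma$ with $I(\cT^\sigma)=-H\cup H$; the only new demand here is that $\sigma$ be an FS measure, \emph{without} requiring $H$ to be $\Q$-independent (so that the previously used Kronecker/$\Q$-independent constructions are unavailable by Theorem~\ref{km3}). The key point is that the FS property is strictly weaker than being Kronecker---as noted in the excerpt after~(\ref{fs1}), there exist FS measures that are not even Dirichlet---so there is genuine room to realize non-$\Q$-independent groups while keeping the Foia\c{s}--Stratila conclusion.

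First I would arrange the support of $\sigma$ as a union over $h\in H$ of scaled copies of a single building-block measure, so that the relation $\sigma_h\equiv\sigma$ holds exactly for $h\in H$ and fails otherwise; this is what pins down $I(\sigma)=H$ and hence, via~(\ref{ni}) together with simplicity of the spectrum (GAG), forces $I(\cT^\sigma)=-H\cup H$. The freedom in choosing the building block is then spent on securing the FS property. The natural route is to take $\sigma$ concentrated on a (symmetrized) independent Helson set, invoking the result announced in~\cite{Pa} that a continuous measure on an independent Helson set is Kronecker---or, more robustly, to build $\sigma$ as a suitable limit/superposition whose symmetrization satisfies~(\ref{fs1}) directly. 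The technical engine is that the FS conclusion localizes: if each scaled block $\sigma_h$ is an FS (indeed Kronecker-type) measure and the blocks are placed on a sufficiently spread-out, $\Q$-independent-in-the-harmonic-analysis-sense configuration, then characters approximating a given unimodular $f\in L^2(\R,\sigma)$ on one block can be patched to approximate $f$ simultaneously across all blocks, using translates $t_n\to\infty$ that exploit the lacunary geometry.

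I would structure the argument so that simplicity of the spectrum comes essentially for free from the support geometry (distinct dilated blocks carrying mutually singular measures give multiplicity one), and then verify the self-similarity group by checking both inclusions: $\sigma_h\equiv\sigma$ for $h\in H$ is built in by construction, while $\sigma_s\not\equiv\sigma$ for $s\notin-H\cup H$ follows from the rigidity of the block placement, precisely the disjointness-type control alluded to in the footnote of Theorem~\ref{km10}. The FS property is then the final and most delicate ingredient, established on the assembled measure.

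The hard part will be guaranteeing the FS property globally once the blocks are dilated by the whole group $H$: when $H$ is not $\Q$-independent, the dilated supports may develop additive coincidences, and one must ensure that these do not destroy the simultaneous character-approximation needed in~(\ref{fs1}). Controlling this requires a careful quantitative choice of the Helson (or generalized Kronecker) building block together with a geometry of dilates that is "independent enough" for harmonic analysis---so that the Helson constant and the approximation rates survive the superposition---even though it is emphatically \emph{not} $\Q$-independent as a set. Managing the interaction between the multiplicative structure of $H$ and the additive Helson/FS estimates is where the real work lies.
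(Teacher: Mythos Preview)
Your overall architecture---build $\sigma$ as $\sum_{h\in H}a_h\nu_h$ for a single building block $\nu$, make each $\nu_h$ Kronecker, and verify $I(\cT^\sigma)=-H\cup H$ via $\nu_s\perp\nu\ast\delta_t$---matches the paper. But there are two genuine gaps.

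\medskip
\textbf{The FS step.} You propose to obtain the FS property of $\sigma$ by a direct harmonic-analytic argument, ``patching'' characters simultaneously across all dilated blocks so that~(\ref{fs1}) holds globally. This is essentially an attempt to make $\sigma$ itself Kronecker (or at least to reproduce the Foia\c{s}--Stratila mechanism via approximation by characters on the full support). That cannot succeed when $H$ is not additively $\Q$-independent: by Theorem~\ref{km3} the resulting $\sigma$ is then \emph{not} concentrated on any $\Q$-independent Borel set, so no Kronecker-type uniform approximation on $\bigcup_h h\,\mathrm{supp}(\nu)$ is available, and the ``additive coincidences'' you worry about are genuinely fatal to that approach. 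The paper bypasses this entirely via Proposition~\ref{lp100}: if $\sigma_1,\sigma_2$ are FS and $\cT^{\sigma_1}\perp\cT^{\sigma_2}$, then $\tfrac12(\sigma_1+\sigma_2)$ is FS. One arranges, by a Baire-category argument (Lemmas~\ref{sw11} and~\ref{km101} with $H=\{1\}$, plus Remark~\ref{rem:restfun}), that the single block $\nu$ is Kronecker and satisfies $\nu_s\perp\nu\ast\delta_t$ for all $s\neq1$, $t\in\R$; then each $\nu_h$ is Kronecker (hence FS), the flows $\cT^{\nu_h}$ are pairwise disjoint by Theorem~\ref{lpt}, and Proposition~\ref{lp100} (iterated, with an $L^2$-limit of Gaussians is Gaussian) gives FS for the sum. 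No global character approximation is needed or attempted.

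\medskip
\textbf{Simple spectrum.} Your claim that simplicity ``comes essentially for free from the support geometry (distinct dilated blocks carrying mutually singular measures give multiplicity one)'' conflates simple spectrum on the Gaussian space $\mathcal{H}$ with simple spectrum of the full Koopman representation on $L^2$. The latter requires control of all convolution powers $\widetilde{\sigma}^{\ast n}$, which mutual singularity of the blocks does not give. The paper secures this via Lemma~\ref{joa}, i.e.\ by intersecting with the $G_\delta$ dense set of $\nu$ for which the Danilenko--Ryzhikov conditions a$'$), b$'$), c$'$) hold; these force simplicity of $\cT^{(\sum_h a_h\nu_h)|_{\R_+}}$ on the full $L^2$. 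This is a separate, non-trivial ingredient that your outline does not supply.
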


In particular, in connection with the forementioned question from
\cite{Le}, there is an FS measure for which the Gaussian flow has
simple spectrum but $\sigma$ is not concentrated on a
$\Q$-independent set. These are apparently the first examples of
FS measures which are not concentrated on $\Q$-independent Borel
sets but yield Gaussian flows with simple spectrum (cf.\
\cite{Le-Pa} and \cite{Pa}).

At the end of the note we  will discuss self-similarity properties
of Gaussian flows arising from a ``typical'' measure or from the
maximal spectral types of a ``typical'' flow (cf.\ the disjointness
results from \cite{Da-Ry}).

\begin{tw}\label{typical}
Assume that $0\leq a <b$. For a ``typical'' $\sigma\in\cP([a,b])$
the flow $\cT^\sigma$ is Gaussian-Kronecker such that for each
$|r|\neq |s|$ the flows $\cT^{\sigma_r}$ and $\cT^{\sigma_s}$ are
disjoint. In particular $I(\cT^\sigma)=\{\pm1\}$.

For a ``typical'' flow $\cT$ of a standard probability Borel space
$\xbm$, for its maximal spectral type $\sigma_{\cT}$ we have:
$\cT^{\sigma_{\cT}|_{\R_+}}$ has simple spectrum and for $|r|\neq |s|$ the
flows $\cT^{(\sigma_{\cT}|_{\R_+})_r}$ and $\cT^{(\sigma_{\cT}|_{\R_+})_s}$ are
disjoint. In particular $I(\cT^{\sigma_{\cT}|_{\R_+}})=\{\pm1\}$.
\end{tw}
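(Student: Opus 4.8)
The plan is to prove both statements by Baire category arguments: the first in the Polish space $\cP([a,b])$ of probability measures on $[a,b]$ with the weak-$\ast$ topology, the second in the Polish group of measure-preserving flows of $\xbm$ with the weak topology. In each case the spectral assertion (Gaussian--Kronecker, resp.\ simple spectrum) and the disjointness of all scalings will be realized as a countable intersection of dense open conditions, and the equality $I(\cT^\sigma)=\{\pm1\}$ will be a formal consequence of that disjointness: if $s\neq\pm1$ then $|s|\neq1$, so $\cT^\sigma=\cT^{\sigma_1}$ and $\cT^{\sigma_s}$ are disjoint, which is incompatible with the isomorphism $\cT^\sigma\cong\cT^{\sigma_s}$ that $s\in I(\cT^\sigma)$ would force; since $-1$ is always a scale for a Gaussian flow, $I(\cT^\sigma)=\{\pm1\}$.

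For Part 1 I would first show that the Kronecker measures are residual in $\cP([a,b])$. Fix a countable dense family $(g_k)$ in $C([a,b],\R)$ and put $f_k=\exp(2\pi i g_k)$; since continuous unimodular functions are $L^2(\sigma)$-dense among unimodular functions and $\|f-f'\|_{L^2(\sigma)}\le\|f-f'\|_\infty$ uniformly in $\sigma$, the characterization (\ref{kro1}) holds as soon as every $f_k$ is approximated by characters. Thus
\[
\mathrm{Kron}=\bigcap_{k,n}\bigcup_{t\in\R}\Big\{\sigma:\ \textstyle\int|f_k-\xi_t|^2\,d\sigma<1/n\Big\}
\]
is a $G_\delta$, because for fixed $t$ the integrand is a fixed bounded continuous function, so each set in the inner union is weak-$\ast$ open. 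Density follows since a finite $\Q$-linearly independent set $\{x_1,\dots,x_m\}\subset[a,b]$ is a Kronecker set (Kronecker's theorem) and finitely supported measures with rationally independent atoms are dense. To recover the requirement $t_n\to\infty$ in (\ref{kro1}) I would intersect with the (manifestly $G_\delta$, and dense because Kronecker measures are Dirichlet) set $\{\sigma:\ \limsup_{t\to\infty}\mathrm{Re}\,\widehat\sigma(t)=1\}$ and compose an approximating character with characters $\xi_{s_n}\to1$, $s_n\to\infty$. Hence a typical $\sigma$ is Kronecker, i.e.\ $\cT^\sigma$ is Gaussian--Kronecker.

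For the disjointness I would use the time-rescaling identity $\cT^{\sigma_r}\perp\cT^{\sigma_s}\iff\cT^\sigma\perp\cT^{\sigma_u}$ with $u=s/r$, $|u|\neq1$. When $u\in\Q\setminus\{\pm1\}$ the disjointness holds \emph{for every} Gaussian--Kronecker flow by Theorem~\ref{mkj}, so it is already guaranteed by residual Kronecker-ness. For irrational $u$ I would invoke the GAG theory \cite{Le-Pa-Th}: two Gaussian--Kronecker flows whose first--chaos spectral measures are mutually singular are disjoint, provided the combined spectral data remain in the simple-spectrum (hence GAG) regime, which by \cite{Co-Fo-Si} is ensured by $\Q$-independence of the symmetrized union of supports. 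Since $\sigma_u$ is again Kronecker and $K:=\mathrm{supp}\,\sigma$ is $\Q$-independent, both ingredients reduce to making the supports of $\sigma$ and $\sigma_u$ essentially disjoint. I would therefore add to the residual set the requirement that $\sigma$ be continuous and that $K$ carry no repeated ratios, i.e.\ $|K\cap uK|\le1$ for all $u\neq1$ (a multiplicative Sidon condition); then $\sigma(K\cap uK)=0$, giving $\sigma\perp\sigma_u$, while $u\notin\Q$ together with $\Q$-independence of $K$ handles the joint-independence bookkeeping.

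\emph{The main obstacle is this last step}: converting ``the support has no repeated ratios'' into a genuinely dense $G_\delta$ on $\sigma$, the difficulties being that the support map is only upper semicontinuous and that the quantifier ``for all irrational $u$'' is uncountable, with the singularity and the joint $\Q$-independence needing to hold simultaneously (powers-of-$\pi$ type supports show that neither is automatic). I expect to resolve this by encoding the condition through finitely many strict linear (multiplicative) inequalities on the atoms of approximating measures and exploiting that such configurations are both dense and open-stable, so that the uncountable family of scalings is controlled by a single residual property of $K$. For Part 2 I would run the parallel argument directly in the space of flows: a typical flow is rigid, so $\sigma_{\cT}|_{\R_+}$ is generically concentrated on a $\Q$-independent set, whence $\cT^{\sigma_{\cT}|_{\R_+}}$ has simple spectrum by \cite{Co-Fo-Si}, and the disjointness of scalings is obtained exactly as above (irrational ratios from singularity plus $\Q$-independence, rational ratios from the simple-spectrum analogue of Theorem~\ref{mkj}). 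The extra difficulty here is that $\cT\mapsto\sigma_{\cT}$ is not continuous, so the open conditions must be formulated through the flow's own matrix coefficients $t\mapsto\langle U_{T_t}f,f\rangle$ rather than transported from $\cP([a,b])$.
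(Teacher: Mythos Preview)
Your proposal has a genuine gap at precisely the point you flag as ``the main obstacle,'' and the difficulty is worse than you indicate. The GAG disjointness criterion (Theorem~\ref{lpt}) is not ``$\sigma\perp\sigma_u$'' but rather $\widetilde\sigma\perp\widetilde{\sigma_u}\ast\delta_r$ for \emph{every} $r\in\R$. A multiplicative Sidon condition on $K=\mathrm{supp}\,\sigma$ only yields $\sigma\perp\sigma_u$, i.e.\ the case $r=0$; it says nothing about translates of $\sigma_u$, and there is no reason why a typical Kronecker support should avoid all sets of the form $uK+r$. So even if you succeeded in making the Sidon condition a dense $G_\delta$, it would not deliver the disjointness you need. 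The paper bypasses this entirely: Lemma~\ref{sw11} shows that the \emph{full} condition $\sigma_s\perp\sigma\ast\delta_t$ for all $s\neq1$ and all $t\in\R$ is itself a dense $G_\delta$ in $\cP(\R)$ (hence in $\cP([a,b])$ by restriction). The $G_\delta$ part is a careful compactness argument over $(s,t)$; the density comes not from approximating by finite atomic measures but from an explicit rank-one construction of Danilenko--Ryzhikov whose maximal spectral type lies in this set and has full support, so that~\eqref{recall} applies. Intersecting with the Kronecker $G_\delta$ (your argument here, essentially Lemma~\ref{km101} for $H=\{1\}$, is fine) and applying Theorem~\ref{lpt} finishes Part~1 with no rational/irrational case split.

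Your outline for Part~2 also contains an error: rigidity of $\cT$ means only that $\sigma_{\cT}$ is a Dirichlet measure, which does \emph{not} force concentration on a $\Q$-independent set, so the implication ``typical $\Rightarrow$ simple spectrum of $\cT^{\sigma_{\cT}|_{\R_+}}$ via \cite{Co-Fo-Si}'' fails. The paper instead invokes the fact (from \cite{Le-Pa:conv}) that a typical flow has the SC property, which is exactly equivalent to simple spectrum of the associated Gaussian flow; the orthogonality $\sigma_{\cT}\perp(\sigma_{\cT})_s\ast\delta_r$ is then obtained by transporting Lemma~\ref{sw11} to the space of flows along the lines of \cite[Theorem~3.2]{Da-Ry}, and Theorem~\ref{lpt} again converts this into disjointness. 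Your instinct that the conditions must be phrased through matrix coefficients of the flow rather than through $\sigma_{\cT}$ directly is correct, but the concrete input is the Danilenko--Ryzhikov flow satisfying \eqref{dr1}--\eqref{dr2}, not a generic rigidity statement.
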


\section{Notation and basic results}
Assume that $\cT=(T_t)_{t\in\R}$ is a
measurable\footnote{Measurability means that for each $f\in
L^2\xbm$ the map $t\mapsto f\circ T_t$ is continuous.}
measure-preserving flow acting on a standard probability Borel
space $\xbm$. It then induces a (continuous) one-parameter group
of unitary operators acting on $L^2\xbm$ by the formula
$T_tf=f\circ T_t$. By Bochner's theorem, the function $t\mapsto
\int_X T_tf\cdot\overline{f}\,d\mu$ determines the so called {\em
spectral measure} $\sigma_f$ of $f$ for which
$\widehat\sigma_f(t)= \int_X T_tf\cdot\overline{f}\,d\mu$,
$t\in\R$. Usually, one only considers spectral measures of $f\in
L^2_0\xbm$, that is, of elements with zero mean (the spectral
measure of the constant function $c$ is equal to
$|c|^2\delta_{0}$). Then $\sigma_f$  is a finite positive
Borel measure on $\R$. Among spectral measures there are maximal
ones with respect to the absolute continuity relation. Each such
maximal measure is called a {\em maximal spectral type measure}
and, by some abuse of notation, it will be denoted by
$\sigma_{\cT}$. We refer the reader to \cite{Ka-Th} and \cite{Le}
for some basics about spectral theory of unitary representations
of locally compact Abelian groups in the dynamical context.

Assume that $\cT$ is  ergodic and let $\cS=(S_t)_{t\in\R}$ be
another ergodic flow (acting on $\ycn$). Any probability measure
$\rho$ on $(X\times Y,\cB\otimes\cC)$ which is $(T_t\times
S_t)_{t\in\R}$-invariant and has marginals $\mu$ and $\nu$
respectively, is called a {\em joining} of $\cT$ and $\cS$. If,
additionally, the flow $((T_t\times S_t)_{t\in\R},\rho)$ is
ergodic then $\rho$ is called an {\em ergodic
joining}~\footnote{If $\cT=\cS$ then we speak about {\em
self-joinings}.}. The ergodic joinings are extremal points in the
simplex of all joinings. If the set of joinings is reduced to
contain only the product measure then one speaks about {\em
disjointness} of $\cT$ and $\cS$ \cite{Fu} and we will write
$\cT\perp\cS$. Similar notions appear when one considers
automorphisms. Note that whenever for some $t\neq0$, $T_t\perp
S_t$ then $\cT\perp \cS$. Note also that whenever
\begin{equation}\label{disj}
\text{$\cT$ is weakly mixing then $\cT\perp\cS$ if and only if $T_1\perp S_1$.}
\end{equation}
Indeed, if $T_1\not\perp S_1$ then there exists an ergodic
joining $\rho$ between them different than the product measure. Then,
$\rho\circ (T_r\times S_r)$ for $0\leq r<1$ has the same properties.
By disjointness of $\cT$ and $\cS$, $\int_0^1\rho\circ (T_r\times
S_r)\,dr=\mu\otimes\nu$. But $T_1$ is weakly mixing, so
$\mu\otimes\nu$ is an ergodic joining of $T_1$ and $S_1$, and
therefore $\rho\circ(T_r\times S_r)=\mu\otimes\nu$. We refer the
reader to \cite{Gl} for the theory of joinings in ergodic theory.

A flow $\cT$ is called  Gaussian if there is a $\cT$-invariant subspace
$\mathcal{ H}\subset L_0^2\xbm$ of the zero mean real-valued
functions such that all non-zero variables from $\mathcal{H}$ are
Gaussian and the smallest $\sigma$-algebra making all these
variables measurable equals $\cB$. A Gaussian flow is ergodic if
and only if the maximal spectral type on $\mathcal{H}$ is
continuous (and then $\cT$ is weakly mixing). Since Gaussian
variables are real, it is not hard to see that their spectral
measures are symmetric, that is, for $f\in\mathcal{ H}$,
$\sigma_f$ is invariant under the map $R_{-1}:x\mapsto -x$.

A standard way to obtain a (weakly mixing) Gaussian flow is to
start with a finite positive continuous Borel measure $\sigma$ on
$\R_+$. Consider the symmetrization
 $\widetilde{\sigma}=\sigma+(R_{-1})_\ast\sigma$~\footnote{In general, when $f$ is a
measurable map from $(X,\cB)$ to $(Y,\cC)$ and $\kappa$ is a
probability measure on $X$ then $f_\ast(\kappa)$ is the measure on
$Y$ defined by $f_\ast(\kappa)(C)=\kappa(f^{-1}(C))$.}. We let
${\mathcal V}=(V_t)_{t\in\R}$ denote the one-parameter group of
unitary operators on $L^2(\R,\widetilde{\sigma})$ defined by
$V_t(f)(x)=e^{2\pi i tx}f(x)$.  Then the correspondence
\begin{equation}\label{asd} f(x)\mapsto f(-x)\end{equation}
yields the unitary conjugation of $\mathcal{V}$ and its inverse.
Let $\xbm$ be a {\em Gaussian probability space}, that is, a
standard probability space together with an infinite dimensional,
closed, real and ${\mathcal B}$-generating subspace ${\mathcal
H}\subset L^2\xbm$ whose all non-zero variables are Gaussian. We
then consider ${\mathcal H}+i{\mathcal H}$, so called {\em complex
Gaussian space}, and define an isomorphic copy of ${\mathcal V}$
on it. It is then standard to show (see e.g.\ \cite{Le-Pa-Th},
Section~2) that $\mathcal{ V}$ has a unique extension to a
(measurable) flow ${\mathcal T}^{\sigma}=(T^{\sigma}_t)$ of $\xbm$
for which $U_{T^{\sigma}_t}|_{{\mathcal H}+i{\mathcal H}}=V_t$,
$t\in\R$. By the same token, the correspondence~(\ref{asd})
extends to an isomorphism of $\xbm$ which conjugates the Gaussian
flow and its inverse $(T^\sigma_{-t})_{t\in\R}$.

A Gaussian flow $\cT^\sigma$ is called Gaussian-Kronecker (FS resp.) if $\sigma$ is a continuous
Kronecker (FS resp.) measure. Following
\cite{Le-Pa-Th}, a Gaussian flow $\cT^\sigma$ (with the Gaussian
space $\mathcal{H}$) is called GAG if for each its ergodic
self-joining $\eta$ the space
$$
\{f(x)+g(y):f,g\in\mathcal{ H}\}$$ consists solely of Gaussian
variables  (the flow $(T^\sigma_t\times T^\sigma_t)_{t\in\R}$ is
then a Gaussian flow as well). We have \cite{Le-Pa-Th}

\begin{figure}[ht]
\centering
\includegraphics[height=100pt]{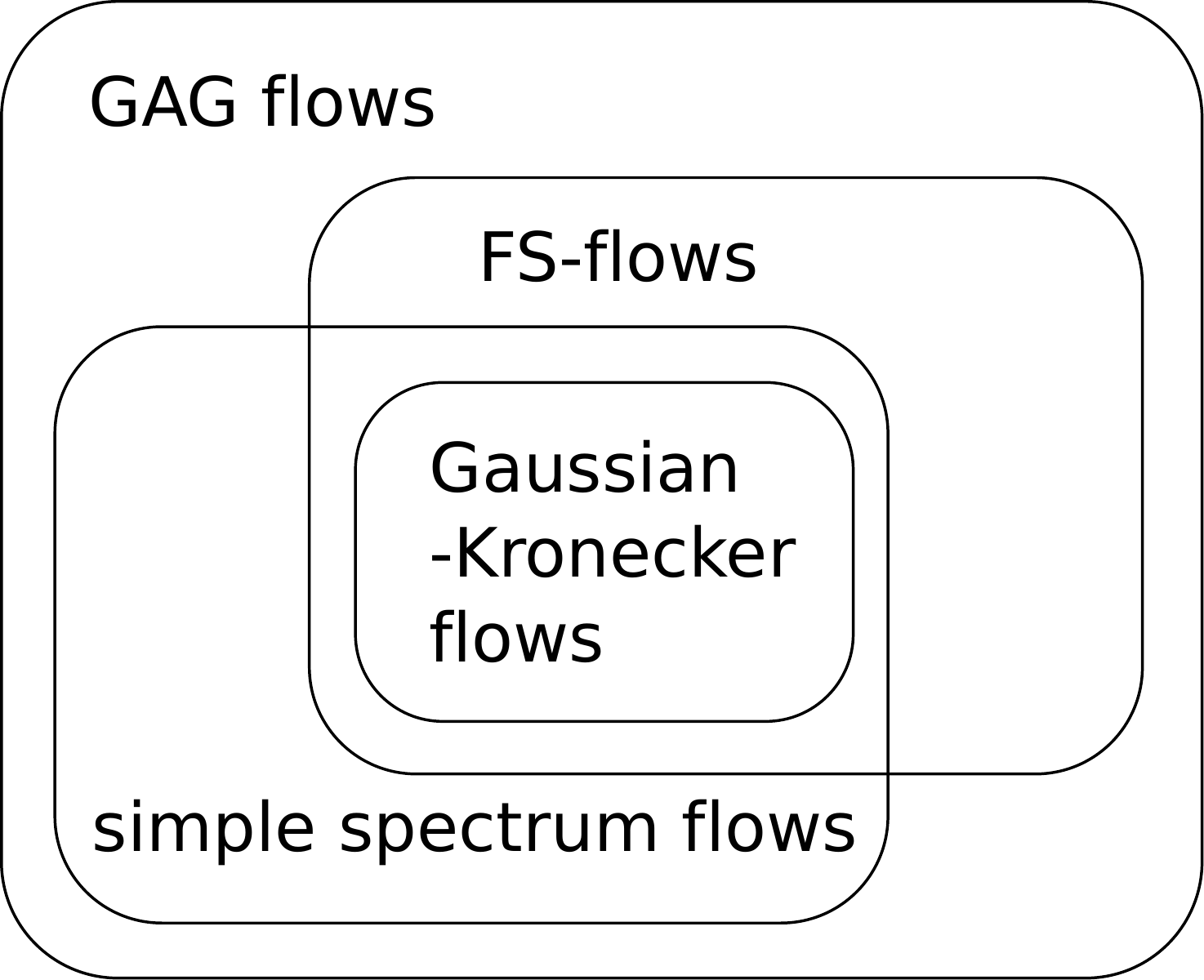}
\caption{Different subclasses of GAG flows}
\end{figure}

For all these classes of flows we have that  if $\cT^\sigma$ is in
the class, so is $\cT^{\sigma_s}$ for $s\neq 0$.

In general, Gaussian flows given by equivalent  measures are
isomorphic. It follows from \cite{Le-Pa-Th} that any isomorphism
between a GAG flow $\cT^\sigma$ and another Gaussian flow
$\cT^\nu$ is entirely determined by a unitary isomorphism of
restrictions of the unitary actions $(T^\sigma_t)_{t\in\R}$ and $(T^\nu_t)_{t\in\R}$
to their Gaussian subspaces. That is, in the GAG situation,
$\cT^\sigma$ are $\cT^\nu$ are isomorphic if and only if
$\sigma\equiv\nu$. If we apply that to $\sigma$ and $\sigma_s$ for
$s\in\R_+$ we will immediately get~(\ref{ni}) to hold (in the GAG
case).

We will now prove the following.

\begin{pr}\label{ml1}
Assume that $\cT^\sigma$ is GAG. Fix $s\neq0$. Then the sets of
self-joinings of $\cT^\sigma$ and of self-joinings of $T^\sigma_s$
are the same. (Hence ergodic self-joinings are also the same.) In
particular, the factors and the centralizer of the flow and of the
time $s$-automorphism are the same.
\end{pr}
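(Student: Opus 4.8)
The inclusion is easy in one direction, so the plan is to isolate the one hard step. First I would record that the set $J(\cT^\sigma)$ of self-joinings of the flow is contained in the set $J(T^\sigma_s)$ of self-joinings of $T^\sigma_s$: a measure invariant under the whole flow $(T^\sigma_t\times T^\sigma_t)_{t\in\R}$ is in particular invariant under $T^\sigma_s\times T^\sigma_s$. All the content lies in the reverse inclusion, which I would prove by a conjugation-and-averaging argument in the spirit of the proof of~(\ref{disj}), with the GAG hypothesis entering at exactly one decisive point. Since the self-joinings of $T^\sigma_s$ form a simplex whose extreme points are the $T^\sigma_s\times T^\sigma_s$-ergodic joinings, and an integral of flow-invariant measures is again flow-invariant, it suffices to treat a single $T^\sigma_s\times T^\sigma_s$-ergodic self-joining $\rho$ of $T^\sigma_s$ and show $\rho\in J(\cT^\sigma)$.

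Fixing such a $\rho$, I would set $\rho_r:=(T^\sigma_r\times T^\sigma_r)_\ast\rho$ for $r\in\R$. Because $T^\sigma_r\times T^\sigma_r$ commutes with $T^\sigma_s\times T^\sigma_s$, each $\rho_r$ is again a $T^\sigma_s\times T^\sigma_s$-ergodic self-joining, and the map $r\mapsto\rho_r$ is $s$-periodic since $\rho_{r+s}=(T^\sigma_r\times T^\sigma_r)_\ast(T^\sigma_s\times T^\sigma_s)_\ast\rho=\rho_r$. I would then form the average $\overline\rho:=\tfrac1s\int_0^s\rho_r\,dr$, which by this periodicity is invariant under $T^\sigma_t\times T^\sigma_t$ for every $t$, so that $\overline\rho\in J(\cT^\sigma)$. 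The task becomes to recover $\rho$ itself from $\overline\rho$.

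The key step is to compare two ergodic decompositions of $\overline\rho$ relative to $T^\sigma_s\times T^\sigma_s$. On one hand, the formula $\overline\rho=\tfrac1s\int_0^s\rho_r\,dr$ presents $\overline\rho$ as an integral of the ergodic measures $\rho_r$, so by uniqueness of the ergodic decomposition the corresponding decomposition measure is the law $P$ of $r\mapsto\rho_r$. On the other hand, I would decompose $\overline\rho$ into its flow-ergodic components, $\overline\rho=\int\eta\,dQ(\eta)$. \textbf{Here the GAG property is used:} each flow-ergodic self-joining $\eta$ of $\cT^\sigma$ is Gaussian, hence $(T^\sigma_t\times T^\sigma_t,\eta)$ is an ergodic Gaussian flow, therefore weakly mixing, and so its time-$s$ map $(T^\sigma_s\times T^\sigma_s,\eta)$ is ergodic. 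Thus $Q$ is also supported on $T^\sigma_s\times T^\sigma_s$-ergodic joinings and represents $\overline\rho$, and uniqueness of the ergodic decomposition forces $P=Q$. Since $Q$ is carried by flow-ergodic measures, so is $P$; as $P$ is the law of $r\mapsto\rho_r$, I conclude that $\rho_r$ is flow-invariant for Lebesgue-almost every $r\in[0,s)$. Choosing one such $r$ and conjugating back by the flow element $T^\sigma_{-r}\times T^\sigma_{-r}$ shows that $\rho=\rho_0$ is flow-invariant, giving $J(T^\sigma_s)=J(\cT^\sigma)$. The main obstacle is precisely this identification $P=Q$: without GAG the flow-ergodic components of $\overline\rho$ need not have ergodic time-$s$ maps, and the two decompositions could fail to match.

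Finally, the statements on factors and the centralizer would follow formally from the equality of self-joining sets. For $R$ commuting with $T^\sigma_s$, the off-diagonal joining carried by the graph $\{(x,Rx)\}$ is a self-joining of $T^\sigma_s$, hence of the flow, so its support is $T^\sigma_t\times T^\sigma_t$-invariant for all $t$, i.e.\ $R$ commutes with the whole flow; the reverse inclusion of centralizers is trivial. For a factor, i.e.\ a $T^\sigma_s$-invariant sub-$\sigma$-algebra $\mathcal A$, the relatively independent self-joining $\rho_{\mathcal A}$ over $\mathcal A$ is a self-joining of $T^\sigma_s$, hence of the flow; using the characterization $\mathcal A=\{C\in\cB:\mathbf 1_C(x)=\mathbf 1_C(y)\ \rho_{\mathcal A}\text{-a.e.}\}$, the flow-invariance of $\rho_{\mathcal A}$ immediately yields $T^\sigma_t$-invariance of $\mathcal A$ for every $t$.
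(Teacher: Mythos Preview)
Your argument is correct and follows essentially the same route as the paper: the paper invokes Theorem~6.1 of \cite{Ju-Ru}, which says that $J(\cT^\sigma)=J(T^\sigma_s)$ as soon as every flow-ergodic self-joining is $T^\sigma_s$-ergodic, and then verifies this hypothesis exactly as you do (GAG $\Rightarrow$ flow-ergodic self-joinings are Gaussian $\Rightarrow$ weakly mixing $\Rightarrow$ time-$s$ ergodic). You have simply written out the del~Junco--Rudolph averaging/ergodic-decomposition argument in full rather than citing it, and your deductions for the centralizer and factors from the equality of self-joining sets are the standard ones.
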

\begin{proof}
This follows from the proof of Theorem~6.1 in \cite{Ju-Ru} which
asserts that such an equality of the sets of self-joinings takes place
whenever each ergodic self-joining of the flow is an ergodic
self-joining for the time-$s$ automorphism. In the GAG case, by
definition, such ergodic joinings  for the flow $\cT^\sigma$ are
Gaussian joinings, so they are automatically ergodic for the
$T^\sigma_s$ \cite{Le-Pa-Th}. \end{proof}

\begin{wn}\label{ml11}
Assume that $\cT^\sigma$ is GAG.  Then $T^\sigma_s$ is a GAG
automorphism for each $s\neq0$.
\end{wn}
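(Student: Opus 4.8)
The plan is to read the statement off almost directly from Proposition~\ref{ml1}. First I would observe that $T^\sigma_s$ is itself a Gaussian automorphism whose Gaussian space may be taken to be the same $\mathcal{H}$ as for the flow $\cT^\sigma$: indeed, $\mathcal{H}$ is $\cT^\sigma$-invariant, hence in particular $T^\sigma_s$-invariant, it consists of real zero-mean Gaussian variables, and it generates $\mathcal{B}$. Thus the notion ``GAG'' makes sense for $T^\sigma_s$, with the defining space $\{f(x)+g(y):f,g\in\mathcal{H}\}$ being literally the same expression as in the definition for the flow.

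Next I would unwind the definition of a GAG automorphism: $T^\sigma_s$ is GAG precisely when, for each of its ergodic self-joinings $\eta$, the space $\{f(x)+g(y):f,g\in\mathcal{H}\}$ consists solely of Gaussian variables with respect to $\eta$. By Proposition~\ref{ml1} the ergodic self-joinings of $T^\sigma_s$ coincide with the ergodic self-joinings of the flow $\cT^\sigma$. Since $\cT^\sigma$ is assumed GAG, for every such ergodic self-joining $\eta$ the space $\{f(x)+g(y):f,g\in\mathcal{H}\}$ is Gaussian by hypothesis; as the two collections of ergodic self-joinings agree and the tested space is the same, this is exactly the GAG condition for $T^\sigma_s$.

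I do not expect any genuine obstacle here, since Proposition~\ref{ml1} already carries the entire weight: the only points to verify are the bookkeeping ones above, namely that the Gaussian space $\mathcal{H}$ serves for the automorphism as well as for the flow, and that the GAG condition is phrased in terms of ergodic self-joinings, so that the coincidence of ergodic self-joinings suffices and one need not argue about the whole simplex of self-joinings. If one preferred to avoid even the reference to the ergodic decomposition, one could instead invoke the full equality of the sets of self-joinings from Proposition~\ref{ml1} and note that Gaussianity of $\{f(x)+g(y):f,g\in\mathcal{H}\}$ for all ergodic joinings propagates to all joinings by integration; but for the GAG definition this is unnecessary.
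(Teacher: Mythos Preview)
Your proposal is correct and matches the paper's approach: the corollary is stated without proof, as an immediate consequence of Proposition~\ref{ml1}, exactly along the lines you sketch. The only content is the observation that the Gaussian space $\mathcal{H}$ and hence the defining condition for GAG are the same for the flow and for each $T^\sigma_s$, so equality of (ergodic) self-joinings transfers the GAG property.
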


\noindent We will also make use of the following  results.

\begin{tw}[\cite{Le-Pa-Th}]\label{lpt}
Assume that $\cT^\sigma$ is GAG and let $\cT^\eta$ be an arbitrary
Gaussian flow. Then $\cT^\sigma\perp\cT^\eta$ if and only if
$\widetilde{\sigma}\perp\widetilde{\eta}\ast\delta_r$ for each
$r\in\R$.
\end{tw}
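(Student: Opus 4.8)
The plan is to test disjointness against joinings and to turn every joining into spectral data living on the first Wiener chaos, the conversion being powered by the GAG hypothesis on $\cT^\sigma$. Throughout I write $\mathcal{H}^\sigma\subset L^2_0\xbm$ and $\mathcal{H}^\eta\subset L^2_0\ycn$ for the (complexified) Gaussian spaces; the spectral theorem identifies these with $L^2(\R,\widetilde\sigma)$ and $L^2(\R,\widetilde\eta)$ in such a way that the Koopman actions $U_{T^\sigma_t}$ and $U_{T^\eta_t}$ become multiplication by $x\mapsto e^{2\pi itx}$. Since $\cT^\sigma$ is ergodic Gaussian, hence weakly mixing, (\ref{disj}) lets me pass freely between disjointness of the flows and disjointness of their time-one maps whenever it is convenient to work with a single unitary, and Proposition~\ref{ml1} guarantees that the GAG machinery applies equally to the flow and to $T^\sigma_s$.

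For the implication ``$\widetilde\sigma\perp\widetilde\eta\ast\delta_r$ for all $r\Rightarrow\cT^\sigma\perp\cT^\eta$'' I argue by contraposition: starting from an ergodic joining $\rho\neq\mu\otimes\nu$ I produce an overlap of $\widetilde\sigma$ with a translate of $\widetilde\eta$. Form the relatively independent self-joining $\lambda=\int_Y\rho_y\otimes\rho_y\,d\nu(y)$ of $\cT^\sigma$ over $\cT^\eta$. By the GAG property every ergodic component of $\lambda$ is Gaussian, so the Markov operator it induces on $L^2\xbm$ preserves the Wiener chaos and is the second quantization of its restriction $A:=\Phi^\ast\Phi|_{\mathcal{H}^\sigma}$ to the first chaos, where $\Phi=E_\rho[\,\cdot\mid Y]\colon\mathcal{H}^\sigma\to L^2\ycn$. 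Because a Gaussian self-joining is the product joining precisely when its first-chaos part vanishes, and because $A\ge 0$, non-triviality of $\rho$ forces $A\neq 0$; hence $\Phi$ does not annihilate $\mathcal{H}^\sigma$. Polar decomposition then yields a nonzero closed subspace $K\subseteq\mathcal{H}^\sigma$ and a flow-equivariant isometry $U\colon K\to L^2\ycn$. Its image is a flow-invariant subspace of $L^2\ycn$ carrying a spectral type $\theta\ll\widetilde\sigma$, while, sitting inside $L^2\ycn$, it must also satisfy $\theta\ll\sum_{n\ge 1}\tfrac{1}{n!}\widetilde\eta^{\ast n}$.

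The decisive step is to upgrade the relation $\theta\ll\widetilde\sigma$, $\theta\ll\sum_{n\ge1}\tfrac1{n!}\widetilde\eta^{\ast n}$ to a single-translate overlap $\widetilde\sigma\not\perp\widetilde\eta\ast\delta_r$. Here I would exploit that $U(K)$ is the equivariant image of a \emph{Gaussian} space: decomposing each $U\xi$ along the Wiener chaos of $\cT^\eta$ and matching spectral types chaos by chaos, the convolution structure $\widetilde\eta^{\ast n}$ of the $n$-th chaos expresses a piece of $\widetilde\sigma$ as carried by $n$ frequencies of $\operatorname{supp}\widetilde\eta$; isolating the contribution of the ``free'' frequency records exactly one real translation parameter $r$, so that the matched part of $\widetilde\sigma$ is equivalent to a translate of a part of $\widetilde\eta$. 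I expect this to be the main obstacle: for singular $\widetilde\sigma,\widetilde\eta$ (the Kronecker case) support information is useless, and one must use the rigidity of the Gaussian/GAG structure — together with the real symmetry $f(x)\mapsto f(-x)$ conjugating the flow to its inverse — to see that the chaos can conspire only through a genuine translate, yielding $\widetilde\sigma\not\perp\widetilde\eta\ast\delta_r$ for some $r$.

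For the reverse implication I would construct an explicit non-product joining from a common component of $\widetilde\sigma$ and $\widetilde\eta\ast\delta_r$. When $r=0$ this is routine: a shared spectral part produces a flow-equivariant, conjugation-respecting partial isometry $\mathcal{H}^\eta\to\mathcal{H}^\sigma$, hence a non-trivial Gaussian joining. For $r\neq 0$ the frequency shift cannot be carried by a single Gaussian variable, the flows being weakly mixing, so it must be realised inside the Wiener chaos and the matched pieces assembled into an honest $\R$-action joining; checking positivity and consistency of this coupling is, on the constructive side, the exact counterpart of the obstacle in the previous paragraph. Combining the two implications gives the stated equivalence.
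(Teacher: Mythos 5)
The paper does not prove Theorem~\ref{lpt} at all: it is quoted from \cite{Le-Pa-Th}, so your attempt can only be measured against the argument in that source, whose engine is the structure theory of joinings between a GAG flow and a Gaussian flow. Your outline assembles the right objects (relative product over $\cT^\eta$, GAG forcing Gaussian ergodic components, the chaos decomposition of $L^2\ycn$ with spectral types $\widetilde{\eta}^{\ast n}$), but the decisive steps are broken or missing. First, the inference that non-triviality of $\rho$ forces $A=\Phi^\ast\Phi|_{\mathcal{H}^\sigma}\neq0$ is invalid. The projected relative product $\lambda$ is in general non-ergodic; its Markov operator is an \emph{integral} of second quantizations of the first-chaos operators $A_\omega$ of its Gaussian ergodic components, and such an integral is not itself a second quantization, so ``product iff first-chaos part vanishes'' applies componentwise only. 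Concretely, let $\rho$ be the graph joining of $\cT^\sigma$ with its factor generated by the even chaos, i.e.\ the fixed algebra of the flow-commuting involution $S$ extending $h\mapsto -h$ on $\mathcal{H}^\sigma$. Then $E_\rho(h\mid Y)=0$ for every $h\in\mathcal{H}^\sigma$, so $A=0$, while $\rho$ is far from product; here $\lambda=\frac12\bigl(\Delta_\mu+\operatorname{graph}(S)\bigr)$ is a mixture of Gaussian self-joinings with first-chaos operators $I$ and $-I$, trivial on the first chaos but equal to the identity on the second. Since your argument for $A\neq0$ nowhere uses that the second system is Gaussian, it would apply verbatim to this $\rho$, hence it cannot be correct: the Gaussianity of $\cT^\eta$ must be injected at exactly this point, which is what the joining analysis of \cite{Le-Pa-Th} accomplishes.

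Second, the step you yourself flag as ``the main obstacle'' is not mere bookkeeping, and the purely spectral implication you hope for is false: from $\widetilde{\sigma}\not\perp\sum_{n\geq1}\frac1{n!}\widetilde{\eta}^{\ast n}$ one \emph{cannot} in general extract an $r$ with $\widetilde{\sigma}\not\perp\widetilde{\eta}\ast\delta_r$. Take $\widetilde{\eta}$ singular with Fourier transform in $L^4(\R)$ (a Salem-type measure of Fourier dimension larger than $1/2$), so that $\widetilde{\eta}^{\ast2}$ is absolutely continuous, and put $\widetilde{\sigma}\sim\widetilde{\eta}^{\ast2}$: every translate $\widetilde{\eta}\ast\delta_r$ is singular, hence orthogonal to $\widetilde{\sigma}$, yet $\widetilde{\sigma}\not\perp\widetilde{\eta}^{\ast2}$. (Such $\sigma$ are of course not GAG spectral types, but your chaos-by-chaos matching does not detect this, so the GAG hypothesis must be used here again, not merely quoted.) Finally, for the converse with $r\neq0$ the missing device is an auxiliary rotation: inside $L^2$ of the product of $\cT^\eta$ with the rotation flow $z\mapsto e^{2\pi irt}z$, the space $\{hz+\overline{hz}:\, h\in\mathcal{H}^\eta+i\mathcal{H}^\eta\}$ is a Gaussian space of spectral type $\widetilde{\eta}\ast\delta_r+\widetilde{\eta}\ast\delta_{-r}$, so a common part of $\widetilde{\sigma}$ with $\widetilde{\eta}\ast\delta_r$ yields a genuine Gaussian joining of $\cT^\sigma$ with this product flow; conditional expectations of second-chaos products (terms of the form $h_1\overline{h_2}+\overline{h_1}h_2\in L^2_0\ycn$) show that its projection to $X\times Y$ is already non-product. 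With this device your ``positivity and consistency'' worries dissolve; without it, your sketch of the converse, like your forward direction, stops exactly where the cited proof begins.
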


\begin{pr}[\cite{Le-Pa}]\label{lp100}
If $\sigma_1$ and $\sigma_2$ are measures with the FS property and
$\cT^{\sigma_1}\perp\cT^{\sigma_2}$ then
$\sigma=\frac12(\sigma_1+\sigma_2)$ is also an FS measure.
\end{pr}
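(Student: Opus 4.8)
The plan is to verify the FS property of $\sigma=\tfrac12(\sigma_1+\sigma_2)$ directly against the definition~(\ref{fs1}): I would take an arbitrary real test function whose spectral measure is $\widetilde{\sigma}$, split it into two pieces whose spectral measures are the symmetrizations of $\sigma_1$ and $\sigma_2$, and then control the \emph{joint} law of these pieces by means of the hypothesis $\cT^{\sigma_1}\perp\cT^{\sigma_2}$. The first point is that disjointness already forces $\widetilde{\sigma_1}\perp\widetilde{\sigma_2}$. Indeed, if these symmetric continuous measures had a common nonzero component $\tau$, then the cyclic (real, symmetric) subspace of the first chaos of $\cT^{\sigma_i}$ with spectral type $\tau$ would generate a common nontrivial Gaussian factor of $\cT^{\sigma_1}$ and $\cT^{\sigma_2}$, contradicting disjointness; alternatively, as $\cT^{\sigma_1}$ is GAG, this is Theorem~\ref{lpt} applied with $r=0$.

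I would then fix an ergodic flow $(S_t)_{t\in\R}$ on $\ycn$ and a real $f\in L^2\ycn$ with $\sigma_f=\widetilde{\sigma}=\tfrac12(\widetilde{\sigma_1}+\widetilde{\sigma_2})$. Using $\widetilde{\sigma_1}\perp\widetilde{\sigma_2}$ I choose disjoint \emph{symmetric} Borel sets $A_1,A_2$ carrying $\widetilde{\sigma_1}$ and $\widetilde{\sigma_2}$ (symmetry can be arranged since each $\widetilde{\sigma_i}$ is invariant under $x\mapsto -x$), and set $f_i=P_{A_i}f$ for the spectral projections $P_{A_i}$ of $(S_t)$. Because $A_i$ is symmetric, $P_{A_i}$ commutes with complex conjugation in the spectral representation of the cyclic space of $f$, so each $f_i$ is real; moreover $f=f_1+f_2$ and $\sigma_{f_i}=\tfrac12\widetilde{\sigma_i}=\widetilde{(\tfrac12\sigma_i)}$. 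Since the FS property is invariant under scaling of the measure (rescale the function by $\sqrt2$) and $\sigma_i$ is FS, each stationary process $(f_i\circ S_t)_{t\in\R}$ is Gaussian.

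It then remains to deduce that $(f\circ S_t)_{t\in\R}$ is Gaussian, and this is where disjointness enters a second time. Each $f_i$ generates a real Gaussian subspace $\mathcal{H}_i=\overline{\mathrm{span}}\{f_i\circ S_t\}$ with maximal spectral type $\widetilde{\sigma_i}$, so the factor $\mathcal{A}_i$ it generates carries a Gaussian flow $(S_t)|_{\mathcal{A}_i}$ isomorphic to $\cT^{\sigma_i}$ (equivalent spectral measures give isomorphic Gaussian flows). The distribution of $y\mapsto(\pi_1(y),\pi_2(y))$, with $\pi_i$ the factor map onto $\mathcal{A}_i$, is an ergodic joining of $\cT^{\sigma_1}$ and $\cT^{\sigma_2}$, hence by $\cT^{\sigma_1}\perp\cT^{\sigma_2}$ it is the product measure; equivalently $\mathcal{A}_1$ and $\mathcal{A}_2$ are independent $\sigma$-algebras. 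Two independent Gaussian processes are jointly Gaussian, so $(f_1\circ S_t,\,f_2\circ S_t)_{t\in\R}$ is Gaussian and therefore so is its sum $(f\circ S_t)_{t\in\R}$. This shows $\sigma$ is FS.

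The two delicate points are the reality and correctness of the spectral splitting $f=f_1+f_2$, which rests precisely on the singularity $\widetilde{\sigma_1}\perp\widetilde{\sigma_2}$ secured in the first step, and the final passage from individual to joint Gaussianity. I expect the latter to be the main obstacle: a priori, two Gaussian processes sitting inside the same ergodic system need not be jointly Gaussian, and it is exactly the disjointness hypothesis, read as the statement that the diagonal self-joining of the two Gaussian factors is the product, that rules out any non-Gaussian coupling between them.
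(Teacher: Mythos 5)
Your argument is correct: the reality of the spectral splitting via symmetric carriers, the scaling remark for $\sigma_{f_i}=\tfrac12\widetilde{\sigma_i}$, and the double use of disjointness (first, via a common Gaussian factor or Theorem~\ref{lpt} with $r=0$, to get $\widetilde{\sigma_1}\perp\widetilde{\sigma_2}$; second, to force the two generated Gaussian factors to be independent, whence the two Gaussian processes are jointly Gaussian and their sum is Gaussian) all hold up. Note that the paper itself states Proposition~\ref{lp100} without proof, quoting it from \cite{Le-Pa}, and the proof there runs along essentially the same lines as yours — spectral decomposition of $f$ into real pieces carried by the mutually singular symmetrizations, then independence of the Gaussian factors obtained from disjointness — so your proposal reconstructs the source argument rather than deviating from it.
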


\section{Auxiliary lemmas}
Given a compact subset $X\subset\R$ denote by $\cP(X)$ the set of
all Borel probability measures concentrated on $X$ endowed with
the usual weak topology which is compact and metrizable: if
$\{f_n:\:n\geq1\}$ is a dense set in $C(X)$ then
\begin{equation}\label{mkj1}
d(\sigma,\eta)=\sum_{n=1}^\infty \frac1{2^n}\frac{\left|\int
f_n\,d\sigma-\int f_n\,d\eta\right|}{1+\left|\int
f_n\,d\sigma-\int f_n\,d\eta\right|}
\end{equation}
defines a metric compatible with the weak topology. Denote
$\cU(X)=\{f\in C(X):\: |f|=1\}$ which is a closed subset of $C(X)$
in the uniform topology, in particular $\cU(X)$ is a Polish space.

\begin{lm}\label{km100}
Assume that $X=[a,b]$. Let $\{h_0,h_1,\ldots,h_m\}\subset\R^\ast$
be a $\Q$-independent set. Then for each $f\in
\cU\left(\bigcup_{j=0}^mh_jX\right)$ and $\vep>0$
\begin{equation}\label{mkj2}
\begin{array}{c}
A_{f,\vep}(h_1,\ldots,h_m):=\\
\left\{\sigma\in \cP\left([a,b]\right):\:(\exists t\in\R)\;\left\|f-\xi_{t}\right\|_{L^2\left(\R,\frac1{m+1}\sum_{j=0}^m\sigma_{h_j}\right)}<\vep\right\}
\end{array}
\end{equation}
is open and dense in $\cP([a,b])$.
\end{lm}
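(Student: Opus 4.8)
The plan is to first rewrite the defining condition of $A_{f,\vep}(h_1,\ldots,h_m)$ in a computable form. Since $\sigma_{h_j}=(R_{h_j})_\ast\sigma$ is concentrated on $h_jX$ and $\int g\,d\sigma_{h_j}=\int g(h_jx)\,d\sigma(x)$ for $\sigma\in\cP([a,b])$, I would expand the squared norm as
\[
\left\|f-\xi_t\right\|^2_{L^2\left(\R,\frac1{m+1}\sum_{j=0}^m\sigma_{h_j}\right)}=\frac1{m+1}\sum_{j=0}^m\int_{[a,b]}\left|f(h_jx)-e^{2\pi i th_jx}\right|^2\,d\sigma(x).
\]
Thus $\sigma\in A_{f,\vep}(h_1,\ldots,h_m)$ precisely when this quantity is $<\vep^2$ for some $t\in\R$. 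For openness I fix $t$ and observe that, because $f\in\cU\left(\bigcup_{j=0}^mh_jX\right)$, the map $x\mapsto\frac1{m+1}\sum_{j=0}^m\left|f(h_jx)-e^{2\pi i th_jx}\right|^2$ is a fixed continuous function on $[a,b]$, so its integral against $\sigma$ is weakly continuous. Hence each set $\{\sigma:\ \|f-\xi_t\|^2<\vep^2\}$ is weakly open, and $A_{f,\vep}(h_1,\ldots,h_m)$, being the union of these over $t\in\R$, is open.

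The substance is density. Given $\sigma_0\in\cP([a,b])$, I would first approximate it weakly by a finitely supported measure $\sum_{k=1}^Kp_k\delta_{y_k}$ with $y_k\in[a,b]$, $p_k>0$, $\sum_kp_k=1$; such atomic measures are weakly dense. The key step is then to perturb the atoms slightly to points $x_k$ (distinct, nonzero, and close to $y_k$) so that the finite family $\{h_jx_k:\ 0\leq j\leq m,\ 1\leq k\leq K\}$ becomes $\Q$-independent. To see this is possible, consider a nonzero integer array $(c_{j,k})$; the relation $\sum_{j,k}c_{j,k}h_jx_k=0$ reads $\sum_k\bigl(\sum_jc_{j,k}h_j\bigr)x_k=0$, and since $\{h_0,\ldots,h_m\}$ is $\Q$-independent, some coefficient $\sum_jc_{j,k}h_j$ is nonzero. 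Hence this relation cuts out a proper affine hyperplane in $\R^K$, which has Lebesgue measure zero; taking the countable union over all integer arrays $(c_{j,k})$ still yields a null set. Therefore Lebesgue-almost every $(x_1,\ldots,x_K)$ arbitrarily close to $(y_1,\ldots,y_K)$ produces the desired $\Q$-independence, and the resulting $\sigma=\sum_kp_k\delta_{x_k}$ remains weakly close to $\sigma_0$.

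Finally I would invoke Kronecker's (Weyl's) theorem in its one-parameter form: for $\Q$-independent reals $\alpha_{j,k}:=h_jx_k$ the orbit $\{(t\alpha_{j,k}\bmod 1)_{j,k}:\ t\in\R\}$ is dense in $\T^{(m+1)K}$. Writing $f(h_jx_k)=e^{2\pi i\theta_{j,k}}$, I can thus choose $t\in\R$ with $th_jx_k$ as close as desired to $\theta_{j,k}$ modulo $1$, making every $\left|f(h_jx_k)-e^{2\pi i th_jx_k}\right|$ small; since there are finitely many pairs $(j,k)$ and $\sum_kp_k=1$, this forces $\frac1{m+1}\sum_{j=0}^m\sum_kp_k\left|f(h_jx_k)-e^{2\pi i th_jx_k}\right|^2<\vep^2$, so $\sigma\in A_{f,\vep}(h_1,\ldots,h_m)$. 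I expect the main obstacle to lie in the perturbation step of the density argument, namely realizing that the given $\Q$-independence of $\{h_0,\ldots,h_m\}$ is exactly what guarantees that a small generic move of the atoms yields $\Q$-independence of the full product family $\{h_jx_k\}$, which is the hypothesis that makes Kronecker's theorem applicable; the openness and the final Kronecker estimate are then routine.
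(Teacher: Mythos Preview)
Your proof is correct and follows essentially the same route as the paper's. The paper dismisses openness as ``clear'' while you spell it out via weak continuity of $\sigma\mapsto\int g\,d\sigma$ for the fixed continuous integrand; for density both you and the paper approximate by finitely supported measures, perturb the atoms off a countable union of proper hyperplanes (using $\Q$-independence of $\{h_0,\ldots,h_m\}$ to ensure each hyperplane is proper), and then apply Kronecker's theorem to the resulting $\Q$-independent family $\{h_jx_k\}$.
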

\begin{proof}
The set $A_{f,\vep}(h_1,\ldots,h_m)$ is clearly open, so we need
to show its denseness in $\cP(X)$. Since discrete measures with a
finite number of atoms form a dense subset of $\cP(X)$ we take
$\nu=\sum_{s=1}^Na_s\delta_{y_s}$ with $y_s\in [a,b]$, $a_s>0$,
$s=1,\ldots,N$ and $\sum_{s=1}^N  a_s=1$ and fix $\delta>0$. All
we need to show is to find a subset
$\{x_1,\ldots,x_N\}\subset[a,b]$ such that $|x_s-y_s|<\delta$ for
$s=1,\ldots,N$ and such that the set
$$L:=\bigcup_{j=0}^m\left\{h_jx_1,\ldots,h_jx_N\right\}\quad \text{is $\Q$-independent}.$$
Indeed, in this case  by Kronecker's theorem, the set $L$ is a
finite Kronecker set, so the measure
$\frac1{m+1}\sum_{j=0}^m\left(\sum_{s=1}^Na_s\delta_{x_s}\right)_{h_j}$ is
Kronecker, whence belongs to $A_{f,\vep}(h_1,\ldots,h_m)$ and it
$\delta$-approximates $\nu$. To show that $x_1,\ldots,x_N$ can be
selected so that $L$ is $\Q$-independent, consider the algebraic
varieties of the form
$$
\left\{(z_1,\ldots,z_N)\in X^{\times N}:\: \sum_{j=0}^m\sum_{s=1}^Nk_{js}h_jz_s=0\right\}
$$
for some non-zero integer matrix $\left(k_{js}\right)$. Since
$$
\sum_{j=0}^m\sum_{s=1}^Nk_{js}h_jz_s=\sum_{s=1}^N\left(\sum_{j=0}^mk_{js}h_j\right)z_s$$
and $\sum_{j=0}^mk_{js}h_j\neq 0$ whenever
$(k_{0s},\ldots,k_{ms})\neq(0,\ldots,0)$ (and there are such
vectors since the matrix $(k_{js})$ is not zero), each such
variety has $N$-dimensional Lebesgue measure zero. Since there are
only countably many such varieties involved, we may discard the
union $S$ of them from $[a,b]^{\times N}$. Now, each choice of
$(x_1,\ldots,x_N)$ from
$(y_1-\delta,y_1+\delta)\times\ldots\times(y_N-\delta,y_N+\delta)\setminus
S$ satisfies our requirements.
\end{proof}

\begin{lm}\label{km101}
Given $H\subset \R^\ast_+$ a countable subset which is a
$\Q$-independent set, the set of continuous (Kronecker) measures
$\sigma\in\cP([a,b])$  for which the measure
\begin{equation}\label{km99}
\sum_{h\in H}a_h\sigma_h\quad \text{is a Kronecker measure (on $\R$)}
\end{equation}
for each choice of $a_h\geq0$, $\sum_{h\in H}a_h=1$, is a $G_\delta$ and
dense subset of $\cP([a,b])$.
\end{lm}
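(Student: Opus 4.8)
The plan is to leverage Lemma~\ref{km100} together with a Baire category argument. The key observation is that the property in \eqref{km99} — that $\sum_{h\in H}a_h\sigma_h$ is Kronecker for \emph{every} admissible choice of weights $(a_h)$ — can be reformulated as a countable intersection of sets of the type $A_{f,\vep}$ appearing in Lemma~\ref{km100}, which are there shown to be open and dense. Since $\cP([a,b])$ is a compact metric space, hence a Baire space, such a countable intersection is a dense $G_\delta$, giving exactly the assertion.

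The first step is to recall the definition: a finite positive measure $\tau$ on $\R$ is Kronecker precisely when every $f\in\cU(\mathrm{supp}\,\tau)$ (equivalently, every $f\in L^2(\R,\tau)$ with $|f|=1$ $\tau$-a.e.) is approximable in $L^2(\R,\tau)$ by characters $\xi_t$. The subtlety is that the Kronecker condition quantifies over all unimodular $f$ and all weight vectors $(a_h)$ simultaneously, both ranging over uncountable sets, whereas Lemma~\ref{km100} handles only a single unimodular $f$, a single tuple $\vep>0$, and a \emph{finite} subfamily $h_1,\dots,h_m$ of $H$ with equal weights $\tfrac1{m+1}$. The plan is to reduce the uncountable quantifications to countable ones by a density/separability argument. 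Enumerate $H=\{h_1,h_2,\dots\}$ and for each $m$ consider the finite tuple $(h_0,h_1,\dots,h_m)$ (taking $h_0$ to be $1$, say, or simply working with the finite initial segments). Since $\cU\left(\bigcup_{j}h_jX\right)$ is a Polish space, fix a countable dense subset $\{f_k\}$; likewise let $\vep$ range over $\{1/n:n\geq1\}$. Then set
\[
\mathcal{G}=\bigcap_{m\geq1}\bigcap_{k\geq1}\bigcap_{n\geq1}A_{f_k,1/n}(h_1,\dots,h_m).
\]
By Lemma~\ref{km100} each set in this countable intersection is open and dense, so $\mathcal{G}$ is a dense $G_\delta$.

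The main obstacle, and the step requiring genuine care, is showing that membership in $\mathcal{G}$ actually forces \eqref{km99} for \emph{arbitrary} nonnegative weights $(a_h)$ with $\sum_h a_h=1$, not merely for the uniform weights $\tfrac1{m+1}$ on finite initial segments. The point is that once $\sigma\in\mathcal{G}$, the uniform approximation over all $f_k$ and all $1/n$ combined with density of $\{f_k\}$ in $\cU$ shows that $\frac1{m+1}\sum_{j=0}^m\sigma_{h_j}$ is Kronecker for every $m$; I would then argue that the Kronecker property transfers to a general weighted sum $\tau=\sum_{h\in H}a_h\sigma_h$. Here I would use two facts: first, that $\tau$ is absolutely continuous with respect to (and has the same null sets, for positive weights, as) the uniform combination, so they are concentrated on the same Borel set which is an increasing union of Kronecker sets; and second, as noted in the Introduction, a measure concentrated on such a countable union of Kronecker sets (an increasing sequence) is itself Kronecker. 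Thus the Kronecker property is really a property of the support-structure (the underlying $\Q$-independent Kronecker-type set carrying the measures), independent of the particular weights, and this is what closes the argument. The heart of the work is therefore this passage from equal weights on finite subfamilies to arbitrary weights on all of $H$, which is exactly where the $\Q$-independence of $H$ and the Kronecker-set geometry are indispensable.
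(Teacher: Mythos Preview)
Your proposal is correct and follows essentially the same route as the paper: intersect the open dense sets $A_{f,\vep}(h_1,\ldots,h_m)$ from Lemma~\ref{km100} over countable dense families of $f$'s (one family for each $m$), over $\vep=1/p$, and over $m$, together with $\cP_c([a,b])$, and invoke Baire category. For the passage from uniform weights on finite initial segments to arbitrary weights on all of $H$, the paper uses exactly the mechanism you describe---citing from \cite{Le-Pa} that any measure absolutely continuous with respect to a Kronecker measure is itself Kronecker---applied to each finite partial sum $\sum_{i\leq m}a_{h_i}\sigma_{h_i}\ll\tfrac{1}{m+1}\sum_{i\leq m}\sigma_{h_i}$, from which the Kronecker property of the full series follows by a tail estimate.
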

\begin{proof}
Denote by $\cP_c([a,b])$ the set of continuous measures which is a
$G_\delta$ and dense subset of $\cP([a,b])$.
Let $H=\{h_0,h_1,h_2,\ldots\}$.
For every $m\geq 0$ fix  a countable
dense family $\left\{g^{(m)}_i: i\geq 1\right\}\subset\cU\left(\bigcup_{i=0}^m h_i[a,b]\right)$.
Then, by Lemma~\ref{km100}, the set
\[
\cP_c([a,b])\cap\bigcap_{m=1}^\infty\bigcap_{i=1}^\infty\bigcap_{p=1}^\infty
A_{g^{(m)}_i,\frac1p}(h_1,\ldots,h_m)\]
is $G_\delta$ and dense in
$\cP([a,b])$ and it remains to show that this is precisely the set
of measures satisfying~(\ref{km99}). Indeed, given $m\geq1$, the
set
\[
\mathcal{K}_m(H):=\cP_c([a,b])\cap\bigcap_{i=1}^\infty\bigcap_{p=1}^\infty
A_{g^{(m)}_i,\frac1p}(h_1,\ldots,h_m)\]
is precisely the set of
continuous Kronecker measures $\sigma\in\cP([a,b])$ such that the
measure $\frac1{m+1}\sum_{i=0}^m\sigma_{h_i}$ is a Kronecker
measure (on the real line). Moreover, each measure absolutely
continuous with respect to a Kronecker measure is also a Kronecker
measure \cite{Le-Pa}. Therefore the set
$\mathcal{K}_m(H)$ is equal to the set of all
Kronecker measures $\sigma\in\cP([a,b])$ such that $\sum_{i=0}^m
b_i\sigma_{h_i}$ is Kronecker for arbitrary choice of $b_i\geq0$,
$\sum_{i=0}^mb_i=1$. Finally, for each $m\geq1$,
\[\frac1{\sum_{i=0}^ma_{h_i}}\sum_{i=0}^ma_{h_i}\sigma_{h_i}\ll \frac1{m+1}\sum_{i=0}^{m}\sigma_{h_i},\]
so if for each $m\geq1$ the measure
$\frac1{m+1}\sum_{i=0}^m\sigma_{h_i}$ is Kronecker, so is
$\sum_{h\in H}a_h\sigma_h$.
\end{proof}

\begin{uw}\label{TWKorner}
The idea of the above proofs is taken from a letter that has been
sent to us by T.W.\ K\"orner. In this letter, T.W.\ K\"orner shows
that given a transcendental number $h\in\R$,  for a ``typical''
(in the Hausdorff metric)  closed subset $K\subset[a,b]$ the set
$K\cup hK$ is Kronecker and uncountable. The proofs are the same since finite sets
are dense in the Hausdorff metric and if $h$ is transcendental
then given distinct $y_1,\ldots,y_N\in[a,b]$ and $\delta>0$ we can
find $q_i\in\Q$ so that for $x_i:=h^{2i}q_i$ we have
$|x_i-y_i|<\delta$ for $i=1,\ldots,N$ and clearly the set
$\{x_1,\ldots,x_N,hx_1,\ldots,hx_N\}$ is $\Q$-independent. It only
remains to notice that uncountable closed subsets are typical in
the Hausdorff metric.

Note also that using the proofs of Lemmas~\ref{km100}
and~\ref{km101}, given $H\subset\R^\ast_+$ a countable
multiplicative subgroup which is additively $\Q$-independent, we
obtain that a typical (with respect to the Hausdorff distance)
closed subset $K\subset[a,b]$ has the property that for each
finite subset $C\subset H$ the set $\bigcup_{h\in C} hK$ is
Kronecker, so the set $\bigcup_{h\in H}hK$ is a
$\Q$-independent $F_\sigma$-set.
\end{uw}

We  will also need the following ``compact $\Q$-independent set''
version of Lemma~\ref{km101}.

\begin{lm}\label{km102}
Assume that $K\subset\R$ is a compact uncountable set. Assume that
$H\subset\R^\ast_+$ is a countable set which is
additively $\Q$-independent. Assume moreover that the set
$\bigcup_{h\in H}hK$ is $\Q$-independent. Then the set of
continuous (Kronecker) measures $\sigma$ concentrated on $K$ for
which the measure
\begin{equation}\label{km99a}
\sum_{h\in H}a_h\sigma_h\quad \text{is a Kronecker measure}
\end{equation}
for each choice of $a_h\geq0$, $\sum_{h\in H}a_h=1$, is a $G_\delta$ and
dense subset of $\cP(K)$.\end{lm}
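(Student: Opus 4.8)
The plan is to follow the proofs of Lemmas~\ref{km100} and~\ref{km101} almost verbatim, replacing $[a,b]$ throughout by $K$ and $\cP([a,b])$ by $\cP(K)$. The only genuinely new feature is that the standing hypothesis ``$\bigcup_{h\in H}hK$ is $\Q$-independent'' lets us dispense entirely with the perturbation-and-variety-avoidance argument of Lemma~\ref{km100}: on a compact set $K$ there is no ambient Lebesgue measure to run that genericity argument, but we no longer need it.

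First I would prove the analogue of Lemma~\ref{km100} in this setting: for each $f\in\cU\left(\bigcup_{j=0}^m h_jK\right)$ and each $\vep>0$, the set $A_{f,\vep}(h_1,\ldots,h_m)$, defined as in~(\ref{mkj2}) but with $\cP(K)$ in place of $\cP([a,b])$, is open and dense in $\cP(K)$. Openness is verified exactly as before. For denseness, given $\sigma\in\cP(K)$ and $\delta>0$, I would approximate $\sigma$ to within $\delta$ by a finitely supported measure $\nu=\sum_{s=1}^N a_s\delta_{y_s}$ with pairwise distinct atoms $y_s\in K$ (these are weakly dense in $\cP(K)$). In contrast to Lemma~\ref{km100}, no perturbation of the atoms is needed: the finite set $L=\bigcup_{j=0}^m\{h_jy_1,\ldots,h_jy_N\}$ is a subset of $\bigcup_{h\in H}hK$, hence is $\Q$-independent by hypothesis, hence is a Kronecker set by Kronecker's theorem. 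Therefore $\frac1{m+1}\sum_{j=0}^m\nu_{h_j}$ is supported on the finite Kronecker set $L$, so $\nu\in A_{f,\vep}(h_1,\ldots,h_m)$, giving the desired approximation. (Note every finite subset of $H$ is automatically $\Q$-independent, so the hypothesis on $\{h_0,\ldots,h_m\}$ is free.)

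The rest is a transcription of Lemma~\ref{km101}. Fixing an enumeration $H=\{h_0,h_1,\ldots\}$ and, for each $m$, a countable dense family $\{g_i^{(m)}:i\ge1\}\subset\cU\left(\bigcup_{i=0}^m h_iK\right)$, I would form
\[
\cP_c(K)\cap\bigcap_{m=1}^\infty\bigcap_{i=1}^\infty\bigcap_{p=1}^\infty A_{g_i^{(m)},\frac1p}(h_1,\ldots,h_m),
\]
which by the previous paragraph and the Baire category theorem is a dense $G_\delta$. Exactly as in Lemma~\ref{km101} one identifies the $m$-th factor with the set of continuous Kronecker measures $\sigma\in\cP(K)$ for which $\frac1{m+1}\sum_{i=0}^m\sigma_{h_i}$ is Kronecker; since a measure absolutely continuous with respect to a Kronecker measure is again Kronecker~\cite{Le-Pa}, this coincides with the set of Kronecker $\sigma$ for which $\sum_{i=0}^m b_i\sigma_{h_i}$ is Kronecker for all convex weights $b_i$. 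Intersecting over $m$ and using $\frac1{\sum_{i\le m}a_{h_i}}\sum_{i\le m}a_{h_i}\sigma_{h_i}\ll\frac1{m+1}\sum_{i\le m}\sigma_{h_i}$ together with the passage $m\to\infty$ yields precisely the measures satisfying~(\ref{km99a}).

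The one point requiring genuine care, and essentially the only place where uncountability enters, is the density of $\cP_c(K)$ in $\cP(K)$: this fails if $K$ has isolated points, so the statement should be read with $K$ perfect (equivalently, the constructions should be carried out on the perfect kernel supplied by the Cantor--Bendixson theorem, where every continuous measure on $K$ is concentrated), and for such $K$ the continuous measures do form a dense $G_\delta$. I expect this to be the main, though minor, obstacle; everything substantive reduces to the observation that the global $\Q$-independence of $\bigcup_{h\in H}hK$ performs the role played by the measure-theoretic genericity argument in Lemma~\ref{km100}.
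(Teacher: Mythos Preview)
Your proposal is correct and matches the paper's own proof, which is a single sentence pointing back to the proofs of Lemmas~\ref{km100} and~\ref{km101} with exactly the simplification you identify: the standing $\Q$-independence of $\bigcup_{h\in H}hK$ lets one take $x_i=y_i$ in the approximation step of Lemma~\ref{km100}, bypassing the variety-avoidance argument. Your caveat about the density of $\cP_c(K)$ in $\cP(K)$ is well-taken and is not addressed in the paper; in the sole application (the second proof of Theorem~\ref{km10}) the set $K$ is constructed in \cite{Fr-Le} to be perfect, so the issue does not arise there.
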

\begin{proof}
This follows from the proofs of Lemmas~\ref{km100}
and~\ref{km101}, where in addition the proof of Lemma~\ref{km100}
is simplified; indeed, for any choice of
$\{y_1,\ldots,y_N\}\subset K$ the set
$\bigcup_{j=0}^m\{h_jy_1,\ldots,h_jy_N\}$ is $\Q$-independent by
assumption (so we may take $x_i=y_i$ for $i=1,\ldots,N$).
\end{proof}

\begin{uw}\label{rem:restfun}
For any non-trivial compact interval $[a,b]\subset\R$ denote by
$\cP_c^{[a,b]}(\R)$ the set of measures $\nu\in\cP_c(\R)$ such
that $\nu([a,b])>0$. Since the map $\cP_c(\R)\ni\nu\mapsto
\nu([a,b])\in\R$ is continuous, the set $\cP_c^{[a,b]}(\R)$ is
open and dense in $\cP_c(\R)$. Let us consider the map
$\Delta=\Delta^{[a,b]}:\cP_c^{[a,b]}(\R)\to\cP_c([a,b])$ such that
$\Delta(\nu)$ is the conditional probability measure $\nu(\,\cdot\, |[a,b])$. This map is continuous and the preimage of
any dense subset of $\cP_c([a,b])$ is dense in
$\cP_c^{[a,b]}(\R)$. Indeed, let $A\subset\cP_c([a,b])$ be dense
and take any  $\nu \in \cP_c^{[a,b]}(\R)$. Then there exists a
sequence $(\widetilde{\nu}_n)_{n\leq 1}$ in $A$ such that
$\widetilde{\nu}_n\to \Delta(\nu)$ weakly. For every $n\geq 1$
define $\nu_n\in \cP_c^{[a,b]}(\R)$ so that the restriction of
$\nu_n$ to $[a,b]$ is $\nu([a,b])\widetilde{\nu}_n$ and the
measures $\nu_n$ and $\nu$ coincide on
$\R\setminus [a,b]$. Then $\Delta(\nu_n)=\widetilde{\nu}_n\in A$
and $\nu_n\to\nu$ weakly. Consequently, the preimage
$\Delta^{-1}A$ of any $G_\delta$ dense subset
$A\subset\cP_c([a,b])$ is $G_\delta$ dense in $\cP_c^{[a,b]}(\R)$.
\end{uw}

Before we  prove a certain disjointness property of Kronecker
measures, we will need the following general observation.

\begin{lm}\label{lem:malawieza}
Let $(X,\mathcal{B})$ be a standard Borel space and let
$\varphi:X\to X$ be a measurable map. Let $\sigma$ be a finite positive
continuous Borel measure on $X$ such that the map $\varphi:(X,\sigma)\to
(X,\varphi_*\sigma)$ is almost everywhere invertible. Assume that
$\sigma(\{x\in X:\:\varphi(x)=x\})=0$ and that the measures $\sigma$ and
$\varphi_*\sigma$ are not mutually singular. Then there exists a
measurable set $A\in\mathcal{B}$ such that $\sigma(A)>0$,
$\sigma(A\cap\varphi^{-1}A)=0$ and the measures $\sigma$ and
$\varphi_*\sigma$ restricted to
$A$ are equivalent.
\end{lm}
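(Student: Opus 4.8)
The plan is to separate the two requirements on $A$: the equivalence of $\sigma$ and $\varphi_*\sigma$ on $A$ — which only constrains \emph{where} $A$ is allowed to live — and the condition $\sigma(A\cap\varphi^{-1}A)=0$, which carries the genuine content and is where the absence of fixed points must be exploited. First I would carry out the Lebesgue decomposition of $\varphi_*\sigma$ against $\sigma$ in a symmetric fashion: set $\lambda=\sigma+\varphi_*\sigma$, let $u=d\sigma/d\lambda$ and $v=d(\varphi_*\sigma)/d\lambda$ (so $0\le u,v\le1$ and $u+v=1$ $\lambda$-a.e.), and put
\[
E=\{x:u(x)>0\}\cap\{x:v(x)>0\}.
\]
On every measurable $B\subset E$ the measures $\sigma|_B$ and $(\varphi_*\sigma)|_B$ are equivalent, since both densities are strictly positive there. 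The hypothesis that $\sigma$ and $\varphi_*\sigma$ are not mutually singular translates precisely into $\sigma(E)>0$. Hence condition (3) will hold for \emph{any} $A\subset E$, and the task reduces to producing $A\subset E$ with $\sigma(A)>0$ and $\sigma(A\cap\varphi^{-1}A)=0$.

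The key step is to manufacture ``$\varphi$-independent'' sets from a separating family. Since $(X,\mathcal B)$ is standard Borel, I would fix a countable algebra $\{B_n\}$ generating $\mathcal B$ and separating the points of $X$, and for each $n$ set
\[
C_n=B_n\cap\varphi^{-1}(X\setminus B_n),\qquad D_n=(X\setminus B_n)\cap\varphi^{-1}(B_n).
\]
The crucial (purely set-theoretic) observation is that $C_n\cap\varphi^{-1}C_n=\varnothing$, and likewise $D_n\cap\varphi^{-1}D_n=\varnothing$: if $x\in C_n$ then $\varphi(x)\notin B_n$, whereas $x\in\varphi^{-1}C_n$ forces $\varphi(x)\in B_n$. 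Thus each $C_n$ and each $D_n$ satisfies the disjointness condition (2) \emph{exactly}, not merely $\sigma$-a.e. Moreover, because $\{B_n\}$ separates points, every $x$ with $\varphi(x)\neq x$ lies in some $C_n$ or $D_n$ (take $B_n$ separating $x$ from $\varphi(x)$), so $\{x:\varphi(x)\neq x\}\subset\bigcup_n(C_n\cup D_n)$.

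Finally I would combine the two ingredients. Using $\sigma(\{x:\varphi(x)=x\})=0$ together with $\sigma(E)>0$ gives $\sigma\bigl(E\cap\{\varphi\neq\mathrm{id}\}\bigr)=\sigma(E)>0$, and by the covering above and countable subadditivity some $\sigma(E\cap C_n)$ or $\sigma(E\cap D_n)$ is strictly positive. Taking $A$ to be that set (say $A=E\cap C_n$) yields $\sigma(A)>0$; the inclusion $A\subset E$ secures (3); and $A\cap\varphi^{-1}A\subset C_n\cap\varphi^{-1}C_n=\varnothing$ secures (2). I expect the only real obstacle to be the separating-family construction of sets on which $x$ and $\varphi(x)$ never coexist; once that identity is isolated the rest is bookkeeping. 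Let me note that this argument uses only the fixed-point-free hypothesis and the standard Borel structure — continuity of $\sigma$ and almost-everywhere invertibility of $\varphi$ are not actually needed for this statement, and are presumably recorded for the use made of the lemma elsewhere.
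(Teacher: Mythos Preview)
Your argument is correct, and it is genuinely different from the paper's. The paper also begins by isolating a set $Y$ on which $\sigma$ and $\varphi_*\sigma$ are equivalent, but then proceeds by a dichotomy: either some $B\subset Y$ has $\sigma(B\setminus\varphi(B))>0$, in which case $A=B\setminus\varphi(B)$ works directly, or else $\sigma(B\setminus\varphi(B))=0$ for all $B\subset Y$; in the latter case the authors use the equivalence of $\sigma$ and $\varphi_*\sigma$ on $Y$ together with the a.e.\ invertibility of $\varphi$ to convert this into $\sigma(\varphi^{-1}B\setminus B)=0$, and then argue by contradiction that if no $A\subset Y$ satisfies the disjointness condition one gets $\sigma(B\triangle\varphi^{-1}B)=0$ for every $B\subset Y$, forcing $\varphi=\mathrm{id}$ $\sigma$-a.e.\ on $Y$. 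Your approach via a separating countable algebra and the sets $C_n=B_n\cap\varphi^{-1}(X\setminus B_n)$, $D_n=(X\setminus B_n)\cap\varphi^{-1}B_n$ is more direct and constructive: it produces $A$ in one stroke with $A\cap\varphi^{-1}A=\varnothing$ exactly, not just up to measure zero. As you note, your proof does not use the continuity of $\sigma$ or the a.e.\ invertibility of $\varphi$, whereas the paper's Case~2 genuinely relies on invertibility (to pass from $\varphi_*\sigma(B\setminus\varphi(B))=0$ to $\sigma(\varphi^{-1}B\setminus B)=0$) and implicitly on the measurability of $\varphi(B)$; so your version actually proves a slightly stronger statement.
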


\begin{proof}
By assumption, there exists $Y\in\mathcal{B}$ such that
$\sigma(Y)>0$ and the measures $\sigma$ and $\varphi_*\sigma$
restricted to $Y$ are equivalent. It follows that  if $A\in\cB$,
$A\subset Y$, $\sigma(A)>0$, then the measures $\sigma$
and $\varphi_\ast\sigma$ restricted to $A$ are also equivalent.

\emph{Case 1.} Suppose that there exists $B\in\mathcal{B}$ such
that $B\subset Y$ and $\sigma(B\setminus \varphi(B))>0$. Set
$A:=B\setminus \varphi(B)$. Then $\sigma(A)>0$ and
$A\cap\varphi^{-1}A=(B\setminus \varphi(B))\cap
(\varphi^{-1}B\setminus B)=\emptyset$. Since $A\subset B\subset
Y$, our claim follows.

\emph{Case 2.} Suppose that for every $B\in\mathcal{B}$ with
$B\subset Y$ we have $\sigma(B\setminus \varphi(B))=0$. As
$\sigma$ and $\varphi_*\sigma$ restricted to $Y$ are equivalent,
it follows that
\begin{equation}\label{eq:zaw1}
0=\varphi_*\sigma(B\setminus
\varphi(B))=\sigma(\varphi^{-1}B\setminus B)\quad \text{ for every
} \quad  B\subset Y.
\end{equation}
We now show that there exists
$A\in\mathcal{B}$ such that $A\subset Y$, $\sigma(A)>0$ and
$\sigma(A\cap\varphi^{-1}A)=0$, which gives our assertion. Suppose
that, contrary to our claim, for every $A\in\mathcal{B}$ with
$A\subset Y$ the condition $\sigma(A)>0$ implies
$\sigma(A\cap\varphi^{-1}A)>0$. It follows that
\begin{equation}\label{eq:zaw2}
\sigma(B\setminus\varphi^{-1}B)=0\quad \text{ for every measurable
} \quad  B\in\mathcal{B}\text{ with }B\subset Y.
\end{equation}
Indeed,  otherwise for some $B$ as above,
$A:=B\setminus\varphi^{-1}(B)\subset Y$ would be of positive
$\sigma$-measure and since
\[(B\setminus\varphi^{-1}B)\cap\varphi^{-1}(B\setminus\varphi^{-1}B)=
(B\setminus\varphi^{-1}B)\cap(\varphi^{-1}B\setminus\varphi^{-2}B)=\emptyset,\]
and we would get $\sigma(A\cap
\varphi^{-1}A)=0$, a contradiction.

Now,  \eqref{eq:zaw1} combined with
\eqref{eq:zaw2} gives $\sigma(B\triangle\varphi^{-1}B)=0$ for
every $B\in\mathcal{B}$ with $B\subset Y$. It follows that
$\varphi(x)=x$ for $\sigma$-a.e. $x\in Y$, contrary to assumption.
\end{proof}

For any real  $s$ let $\theta_s:\R\to\R$, $\theta_s(t)=t+s$.
Recall that for every $n\in\Z$ and $z_1,z_2\in {\mathbf S^1}$ we
have
\begin{equation}\label{eq:expy1}
|z_1^n-z_2^n|\leq |n||z_1-z_2|.
\end{equation}

\begin{lm}\label{krzys}
Let $\sigma$ be a continuous Kronecker measure on $\R$. Then for every
$s\in\Q^*\setminus\{1\}$ and $r\in\R$ we have
$\sigma\perp\sigma_s *\delta_r$.
\end{lm}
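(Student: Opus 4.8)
The plan is to argue by contradiction and to feed Lemma~\ref{lem:malawieza} with the affine map $\varphi(x)=sx+r$, noting that $\sigma_s*\delta_r=\varphi_*\sigma$. Suppose $\sigma\not\perp\varphi_*\sigma$. Since $s\neq1$, the map $\varphi$ has the single fixed point $r/(1-s)$, which is $\sigma$-null because $\sigma$ is continuous, and $\varphi$ is a bijection of $\R$; hence all hypotheses of Lemma~\ref{lem:malawieza} are met. It produces a Borel set $A$ with $\sigma(A)>0$, $\sigma(A\cap\varphi^{-1}A)=0$, and $\sigma|_A$ equivalent to $(\varphi_*\sigma)|_A=\varphi_*(\sigma|_B)$, where $B:=\varphi^{-1}A$. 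Thus $A$ and $B$ are $\sigma$-essentially disjoint and both carry positive mass.

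The key algebraic point is that rationality synchronizes the two characters that $\varphi$ relates. Writing $s=p/q$ in lowest terms ($q\geq1$, $p\neq0$), for $x\in B$ we have $q\varphi(x)-px=qr$, a constant, so for every $t$
\[
\xi_{qt}(\varphi(x))\,\xi_{-pt}(x)=e^{2\pi i qtr}\qquad(x\in B),
\]
independent of $x$. I would then fix any $f\in L^2(\R,\sigma)$ with $|f|=1$ $\sigma$-a.e.\ and use the Kronecker property \eqref{kro1} to choose $t_n\to\infty$ with $\xi_{t_n}\to f$ in $L^2(\R,\sigma)$. As $\xi_{t_n}$ and $f$ are unimodular, \eqref{eq:expy1} gives $|\xi_{t_n}^k-f^k|\leq|k|\,|\xi_{t_n}-f|$ pointwise, whence $\xi_{qt_n}\to f^q$ and $\xi_{-pt_n}\to f^{-p}$ in $L^2(\R,\sigma)$; passing to a subsequence I may also assume $e^{2\pi i qt_nr}\to\zeta$ for some $\zeta\in\mathbf S^1$.

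I expect the main obstacle to be transporting the convergence on $A$ back to $B$ through $\varphi$: I control $\xi_{qt_n}\to f^q$ only in $L^2(\sigma|_A)$, whereas the displayed identity is tested against $\varphi_*(\sigma|_B)$, a measure merely \emph{equivalent} to $\sigma|_A$, not equal to it. I would circumvent this by truncating the Radon--Nikodym derivative: pick $M$ so large that the set $A_M\subset A$ on which $d\sigma|_A/d\varphi_*(\sigma|_B)\in[1/M,M]$ has positive $\sigma$-measure (possible since the derivative is positive and finite $\sigma|_A$-a.e.). On $A_M$ the two measures are comparable, so $\xi_{qt_n}\to f^q$ in $L^2(\sigma|_{A_M})$ forces $\xi_{qt_n}\circ\varphi\to f^q\circ\varphi$ in $L^2(\sigma|_{B_M})$, where $B_M:=\varphi^{-1}A_M$. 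Together with $\xi_{-pt_n}\to f^{-p}$ in $L^2(\sigma|_{B_M})$ and the uniform bound $1$ on all terms, the products converge to the product of the limits; comparing with the constant value $e^{2\pi iqt_nr}\to\zeta$ yields the functional equation $f(\varphi(x))^q\,f(x)^{-p}=\zeta$ for $\sigma$-a.e.\ $x\in B_M$.

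Finally I would derive the contradiction by exploiting that $f$ is at my disposal while $A_M$ and $B_M$ are disjoint. Taking $f\equiv1$ on $B_M$ reduces the equation to $f(y)^q=\zeta$ for $\sigma$-a.e.\ $y\in A_M=\varphi(B_M)$; but I can prescribe $f$ on $A_M$ independently, splitting $A_M$ into two positive-measure pieces (using that $\sigma$ is continuous) on which $f^q$ takes two distinct values. No single constant $\zeta$ can then satisfy the equation, contradicting $\sigma\not\perp\varphi_*\sigma$. Hence $\sigma\perp\sigma_s*\delta_r$, as claimed. The decisive ingredients are thus the set $A$ from Lemma~\ref{lem:malawieza}, the rationality identity combined with \eqref{eq:expy1}, and the freedom to choose $f$ separately on the disjoint sets $A$ and $\varphi^{-1}A$.
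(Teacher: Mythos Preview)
Your argument is correct and follows essentially the same strategy as the paper: contradiction via Lemma~\ref{lem:malawieza}, the rational identity $q\varphi(x)-px=qr$ combined with \eqref{eq:expy1}, and a choice of $f$ taking distinct values on two disjoint pieces to force an impossible equality. The only notable difference is that the paper transfers convergence between the equivalent measures $\sigma|_{A}$ and $(\varphi_*\sigma)|_{A}$ by passing to convergence \emph{in measure} (which respects equivalence), thereby sidestepping your Radon--Nikodym truncation, and then finishes with explicit $\varepsilon$-inequalities at two chosen points rather than with a functional equation.
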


\begin{proof}
Suppose that, contrary to our claim, there exists
$s\in\Q^*\setminus\{1\}$ and $r\in\R$ such that
$\sigma\not\perp\sigma_s *\delta_r$. Let $\varphi:=\theta_r\circ
R_s$. Then $\varphi:\R\to\R$ is an invertible map  with one fixed
point and $\sigma_s *\delta_r=\varphi_*\sigma$. By
Lemma~\ref{lem:malawieza}, there exists a Borel set $A_0\subset\R$
such that $\sigma(A_0)>0$, $\sigma(A_0\cap\varphi^{-1}A_0)=0$ and
the measures $\sigma$, $\varphi_*\sigma$ restricted to $A_0$ are
equivalent. Thus $\sigma(\varphi^{-1}A_0)>0$. Let $A_1, A_2\subset
A_0$ be disjoint Borel subsets such that
$\sigma(\varphi^{-1}A_1)>0$ and $\sigma(\varphi^{-1}A_2)>0$.

Let $s=q/p$ with $p$ and $q$ relatively prime integer numbers.
Choose  $z_0\in {\mathbf S^1}$ such that $z_0^q\neq 1$. Let us
consider the measurable map $f:\R\to {\mathbf S^1}$ such that
$f(x)=z_0$ if $x\in \varphi^{-1}A_2$ and $f(x)=1$ otherwise. Since
$\sigma$ is a Kronecker measure, there exists a sequence
$(t_n)_{n\in\N}$ of real numbers such that $\xi_{t_n}\to f$ in
$L^2(\R,\sigma)$. Thus $\xi_{t_n}\circ\varphi^{-1}\to
f\circ\varphi^{-1}$ in $L^2(\R,\varphi_*\sigma)$. Since
\begin{align*}
    g^0_n(x)&:=\chi_{A_0}(x)\left|\exp(2\pi i t_nx)-1\right|\leq \left|\xi_{t_n}(x)-f(x)\right|\\
    g^1_n(x)&:=\chi_{A_1}(x)\left|\exp(2\pi i t_ns^{-1}(x-r))-1\right|\leq \left|\xi_{t_n}(\varphi^{-1}x)-f(\varphi^{-1} x)\right|\\
    g^2_n(x)&:=\chi_{A_2}(x)\left|\exp(2\pi i t_ns^{-1}(x-r))-z_0\right|\leq \left|\xi_{t_n}(\varphi^{-1} x)-f(\varphi^{-1} x)\right|,
\end{align*}
it follows  that $(g^0_n)$ tends to zero in measure $\sigma$ and
the sequences $(g_n^1)$, $(g_n^2)$ tend to zero in measure
$\varphi_*\sigma$. As $\sigma\equiv\varphi_*\sigma$ on $A_0$ and
$A_1,A_2\subset A_0$, the sequences $(g_n^1)$, $(g_n^2)$ tend to
zero in measure $\sigma$, as well. Fix
\begin{equation}\label{def:eps}
    0<\vep<\frac{|z^{q}_0-1|}{2(|p|+|q|)}.
\end{equation}
Then there exist measurable sets $A_k'\subset A_k$, $k=0,1,2$  and
$n\in \N$ such that for $k=0,1,2$
\[\sigma(A_k\setminus
A_k')<\frac{1}{4}\min(\sigma(A_1),\sigma(A_2))\text{ and
}g_n^k(x)<\vep\text{ for all }x\in A_k'.
\]
Therefore for $k=1,2$ we have
\[\sigma(A_k\setminus A_0')\leq\sigma(A_0\setminus
A_0')<\frac{1}{4}\sigma(A_k)\text{ and }\sigma(A_k\setminus
A_k')<\frac{1}{4}\sigma(A_k),
\]
so $\sigma(A'_0\cap A'_k)>\sigma(A_k)/2>0$. Choose two real
numbers $x_1\in A'_0\cap A'_1$ and $x_2\in A'_0\cap A'_2$. Then
\begin{align*}
    & \left|\exp(2\pi i t_nx_1)-1\right|=g_n^0(x_1)<\vep, \quad
    \left|\exp(2\pi i t_n\frac{p}{q}(x_1-r))-1\right|=g_n^1(x_1)<\vep,\\
&   \left|\exp(2\pi i t_nx_2)-1\right|=g_n^0(x_2)<\vep,\quad
    \left|\exp(2\pi i t_n\frac{p}{q}(x_2-r))-z_0\right|=g_n^2(x_2)<\vep.
\end{align*}
In view of \eqref{eq:expy1},
\begin{align*}
&   \left|\exp(2\pi i t_np\,x_1)-1\right|<|p|\vep, \quad
    \left|\exp(2\pi i t_np(x_1-r))-1\right|<|q|\vep,\\
&   \left|\exp(2\pi i t_np\,x_2)-1\right|<|p|\vep,\quad
    \left|\exp(2\pi i t_np(x_2-r))-z^q_0\right|<|q|\vep.
\end{align*}
Hence
\[
    \left|\exp(2\pi i t_n p r)-1\right|<(|p|+|q|)\vep,\quad
    \left|\exp(2\pi i t_n p r)z_0^{q}-1\right|<(|p|+|q|)\vep,
\]
so
\[
    |1-z_0^{q}|<2(|p|+|q|)\vep,
\]
contrary to \eqref{def:eps}.
\end{proof}

Let us now consider the space $\cP(\R)$ of all Borel probability measures on $\R$ endowed with the weak topology.

By supp$(\sigma)$ we always mean the topological support of the
measure $\sigma$. Let us recall that
\begin{gather}\label{recall}
\begin{aligned}
&\text{if $\sigma\in\cP(\R)$ has ${\rm supp}(\sigma)=\R$}\\
&\text{then the set $\{\nu\in\cP(\R)\colon \nu\ll \sigma\}$ is dense in $\cP(\R)$.}
\end{aligned}
\end{gather}
Denote  by $\cP_c(\R)$ the set of all continuous members of
$\cP(\R)$ (this is a $G_\delta$ and dense subset of $\cP(\R)$).

The proof  of the lemma below is a slight modification of the
proof of Lemma 3.1 from \cite{Da-Ry}.
\begin{lm}\label{sw11} The set
\[
\mathcal{S}=\left\{\sigma\in \cP_c(\R)\colon\:
\sigma_s\perp\sigma\ast\delta_t\quad \text{for each} \;1\neq
s\in\R^\ast,\;t\in\R\right\}
\]
is $G_\delta$ and dense in $\cP(\R)$.
\end{lm}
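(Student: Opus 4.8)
The plan is to realize $\mathcal{S}$ as a countable intersection of open dense subsets of $\cP(\R)$ and invoke Baire's theorem. The quantity measuring non-singularity will be the total mass of the infimum $\sigma_s\wedge(\sigma\ast\delta_t)$ of the two probability measures, which satisfies $\|\sigma_s\wedge(\sigma\ast\delta_t)\|=1-\tfrac12\|\sigma_s-\sigma\ast\delta_t\|_{\mathrm{TV}}$ and hence vanishes exactly when $\sigma_s\perp\sigma\ast\delta_t$. Since the total variation norm is lower semicontinuous in the weak topology (being a supremum of $|\int f\,d\mu-\int f\,d\nu|$ over continuous $|f|\le1$), and the affine action $(\sigma,s,t)\mapsto(\sigma_s,\sigma\ast\delta_t)$ is jointly weakly continuous on $\cP(\R)\times(\R\setminus\{0,1\})\times\R$, the function
$$F(\sigma,s,t):=\|\sigma_s\wedge(\sigma\ast\delta_t)\|$$
is jointly upper semicontinuous. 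Observe that $\sigma\in\mathcal{S}$ iff $\sigma\in\cP_c(\R)$ and $F(\sigma,s,t)=0$ for every $(s,t)$ in the parameter space $\Lambda:=(\R\setminus\{0,1\})\times\R$.

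For the $G_\delta$ property I exhaust $\Lambda$ by compact sets, $\Lambda=\bigcup_m K_m$. For each $m$, joint upper semicontinuity of $F$ together with compactness of $K_m$ gives that $G_m(\sigma):=\sup_{(s,t)\in K_m}F(\sigma,s,t)$ is upper semicontinuous in $\sigma$ (the set $\{G_m\ge c\}$ is the projection along the compact fibre $K_m$ of the closed set $\{F\ge c\}$, hence closed). Consequently each level set $\{\sigma:G_m(\sigma)<1/k\}$ is open, and
$$\mathcal{S}=\cP_c(\R)\cap\bigcap_{m\ge1}\bigcap_{k\ge1}\{\sigma:G_m(\sigma)<1/k\}$$
exhibits $\mathcal{S}$ as a $G_\delta$ set (recall that $\cP_c(\R)$ is $G_\delta$). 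Crucially, since every $K_m$ is a compact subset of $\Lambda$, the excluded slope $s=1$ (as well as $s=0$, $s=\pm\infty$ and $t=\pm\infty$) is never approached inside a single $K_m$; this is what makes the compact-fibre supremum harmless.

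By Baire's theorem it then suffices to show that each open set $\{\sigma:G_m(\sigma)<1/k\}$ is dense in $\cP(\R)$: being the intersection of these with the dense $G_\delta$ set $\cP_c(\R)$, the set $\mathcal{S}$ will be dense $G_\delta$. Fix a compact $K_m\subset\Lambda$, so that on $K_m$ the slope $s$ ranges in a compact subset of $\R\setminus\{0,1\}$—in particular $|s-1|\ge\gamma>0$—and $t$ stays bounded. Given a target $\mu\in\cP(\R)$ and $\vep>0$, I first approximate $\mu$ weakly by a finitely supported measure and then replace each of its atoms by a continuous measure supported on a large number $L$ of pairwise well-separated tiny intervals, placing the centres at arithmetically generic positions (discarding, as in the proof of Lemma~\ref{km100}, the countably many measure-zero loci on which the alignment relations $s p_i=p_j+t$ degenerate). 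The point is that scaling by $s$ displaces the piece centred at $p$ by $(s-1)p$, an amount that varies with $p$; since $|s-1|\ge\gamma$ and the centres range over a fixed interval, distinct pieces are genuinely separated by the scaling, so for any single translation $t$ only boundedly many pieces of $\sigma_s$ can meet pieces of $\sigma\ast\delta_t$. Hence $F(\sigma,s,t)=\mathrm{O}(1/L)$ uniformly over $(s,t)\in K_m$, which is $<1/k$ once $L$ is large, while the resulting $\sigma\in\cP_c(\R)$ is $\vep$-close to $\mu$.

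The main obstacle is exactly this density step, and specifically the uniform control of $F(\sigma,s,t)$ over the whole continuum $K_m$ at once. The device that tames it is the Baire decomposition itself: no single measure must be singular to all its non-trivial affine images with $s$ approaching $1$; each sub-problem faces only slopes bounded away from $1$, and the position-dependent displacement $(s-1)p$ then drives the spread-out pieces apart, so that only a vanishing fraction of mass can align under a common translation. It is also worth recording two structural facts that streamline matters: $\mathcal{S}$ is invariant under the affine group (if $\sigma\in\mathcal{S}$ then $\sigma_c,\ \sigma\ast\delta_r\in\mathcal{S}$), and $\mathcal{S}$ is closed under passage to measures $\nu\ll\sigma$, since $\nu_s\ll\sigma_s$ and $\nu\ast\delta_t\ll\sigma\ast\delta_t$ inherit singularity; combined with~(\ref{recall}), these give an alternative route to density once a single full-support element of $\mathcal{S}$ is produced.
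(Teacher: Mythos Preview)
Your $G_\delta$ argument is correct. It parallels the paper's in exhausting the parameter space by compacta, but encodes singularity through the upper semicontinuous quantity $\|\sigma_s\wedge(\sigma\ast\delta_t)\|=1-\tfrac12\|\sigma_s-\sigma\ast\delta_t\|_{\mathrm{TV}}$ rather than through the paper's countable family $\mathcal{I}$ of separating open sets; both routes are valid.

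The density step, however, has a genuine gap. You claim that for the constructed $\sigma$ (mass on $L$ tiny intervals with ``arithmetically generic'' centres $p_i$) one has $F(\sigma,s,t)=O(1/L)$ uniformly over $(s,t)\in K_m$, but the displacement heuristic does not establish this. The bound amounts to showing that for \emph{every} $(s,t)\in K_m$ at most $O(1)$ of the $L^2$ near-incidences $|sp_i-p_j-t|<C\eta$ occur. Since $(s,t)$ varies over a two-dimensional continuum, the device of discarding countably many measure-zero loci that you borrow from Lemma~\ref{km100} is inapplicable: for each pair $(i,j)$ the matching set is the $\eta$-strip about the line $t=sp_i-p_j$ in the $(s,t)$-plane, and what is needed is a bound on the covering multiplicity of $L^2$ such strips at every point of $K_m$. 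That is an incidence statement, not a genericity statement, and your sketch does not address it.

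The paper takes an entirely different route to density and does not build approximating measures at all. It quotes the Danilenko--Ryzhikov rank-one flow $\cT$ (Proposition~3.4 in \cite{Da-Ry}), whose explicit weak-limit relations $T_{-du_k}\to 10^{-l}$ and $T_{-cu_k}\to 0$ along a sequence $u_k\to\infty$ force $\sigma_{\cT}\perp(\sigma_{\cT})_s\ast\delta_r$ for all $s\neq 1$ and $r\in\R$; hence $\sigma_{\cT}\in\mathcal{S}$, it has full topological support, and since $\mathcal{S}$ is hereditary under absolute continuity, density follows from~(\ref{recall}). This is precisely the ``alternative route'' you record in your last sentence, but exhibiting that single full-support member of $\mathcal{S}$ is the entire substance of the density proof, and your proposal does not supply it.
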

\begin{proof}Denote by $\mathcal{I}$ the family of open subset of
$\R$ which are finite unions of open intervals.
Recall that for two measures $\sigma,\nu\in\cP(\R)$
\begin{equation}\label{cond:ort}
\sigma\perp \nu \Longleftrightarrow \forall_{n\in\N}\;\exists_{\mathcal{O}\in\mathcal{I}}\;\sigma(\mathcal{O})<1/n\quad\text{ and }\quad\nu(\mathcal{O})>1-1/n.
\end{equation}
For any compact rectangle  $I\times J\subset
(\R^\ast\setminus\{1\})\times\R$ denote by $\mathcal{V}(I\times
J)$ the set of all finite covers of $I\times J$ by compact
rectangles contained in $(\R^\ast\setminus\{1\})\times \R$. Notice
that for each open subset $\mathcal{O}\in\mathcal{I}$ the map
\begin{equation}\label{eq:map}
\cP_c(\R)\times \R^\ast\times\R\ni  (\sigma,s,r)\mapsto
\sigma_s\ast\delta_r(\mathcal{O})\in \R
\end{equation}
is  continuous. Therefore, given a compact rectangle $F\subset (\R^\ast\setminus\{1\})\times \R$ and an open subset $\mathcal{O}\in\mathcal{I}$ the map
\[
f_{F,\mathcal{O}}\colon \cP_c(\R)\ni \sigma\mapsto
\left(\sigma(\mathcal{O}), \max_{(s,r)\in F}\sigma_s\ast
\delta_r(\mathcal{O})\right) \in\R^2
\]
is continuous.
Let
\[
\widetilde{\cS}=\bigcap_{I\not\ni 1}\bigcap_{J}\bigcap_{n\in\N}
\bigcup_{\kappa\in\mathcal{V}(I\times J)}\bigcap_{F\in\kappa}
\bigcup_{\mathcal{O}\in\mathcal{I}}
f^{-1}_{F,\mathcal{O}}\left(\left(1-1/n, \infty\right)\times \left(-\infty,1/n\right) \right),
\]
where $I$ and $J$ run over closed intervals with rational endpoints. Then $\widetilde{\cS}$ is a $G_\delta$ set.

We claim that $\widetilde{\cS}=\cS$. Indeed,  let $\sigma\in \cS$.
Let $I\not\ni 1$ and $J\subset \R$ be compact intervals and  $n\in
\N$. By assumption and~\eqref{cond:ort}, for every $(s_0,r_0)\in I\times J$ there exists an open set
$\mathcal{O}_{s_0,r_0}\in\mathcal{I}$ such that
\[\sigma(\mathcal{O}_{s_0,r_0})>1-1/n\quad\text{ and }\quad
\sigma_{s_0}\ast \delta_{r_0}(\mathcal{O}_{s_0,r_0})<1/n.\]
Since the map \eqref{eq:map} is continuous, there exist open rectangles $U'_{s_0,r_0}\subset U_{s_0,r_0}\subset \R^2$ such that $(s_0,r_0)\in U'_{s_0,r_0}$ and a compact rectangle $F_{s_0,r_0}\subset(\R^\ast\setminus\{1\})\times \R$ satisfying $U'_{s_0,r_0}\subset F_{s_0,r_0}\subset U_{s_0,r_0}$ such that
\[
\sigma_{s}\ast \delta_{r}(\mathcal{O}_{s_0,r_0})<1/n\quad\text{ for all }(s,r)\in U_{s_0,r_0}.
\]
Since $I\times J$ is compact and $\{U'_{s,r}:(s,r)\in I\times J\}$
is its open cover, there exists a finite cover
$\kappa:=\left\{F_{s_1,r_1},\ldots,F_{s_k,r_k}\right\}$ of
$I\times J$. It follows that
\[
f_{F_{s_j,r_j},\mathcal{O}_{s_j,r_j}}(\sigma)\in (1-1/n, \infty)\times (-\infty,1/n)\quad\text{ for all }
\quad j=1,\ldots,k,
\]
thus $\sigma\in \widetilde{\cS}$.

Suppose that $\sigma\in \widetilde{\cS}$ and fix $s_0\in\R^*\setminus\{1\}$, $r_0\in\R$ and $n\in\N$.
Next choose  $I\not\ni 1$ and $J\subset \R$ compact intervals such that $(s_0,r_0)\in I\times J$.
By assumption, there exists a finite cover $\kappa\in \mathcal{V}(I\times J)$ such that for every $F\in\kappa$
there exists $\mathcal{O}_F\in \mathcal{I}$ with
\[
\sigma(\mathcal{O}_{F})>1-1/n\quad\text{ and }\quad
\sigma_{s}\ast \delta_{r}(\mathcal{O}_{F})<1/n\quad\text{ for all }(s,r)\in F.
\]
Choosing $F\in\kappa$ for which $(s_0,r_0)\in F$ and  applying \eqref{cond:ort} we have that
$\sigma$ and $\sigma_{s_0}*\delta_{r_0}$ are orthogonal, so $\sigma\in\cS$.

It remains to show that $\cS$ is dense.  To this end we use the
proof of Proposition 3.4 in \cite{Da-Ry}. Namely, in this
proposition there is a construction of a weakly mixing flow $\cT$
such that for a certain sequence of real numbers $u_k\to\infty$ we
have: for each $l\in\N$
\begin{equation}\label{dr1}
T_{-du_k}\to 10^{-l}\quad \text{for $d=1-10^{-l}$ and}
\end{equation}
\begin{equation}\label{dr2}
T_{-cu_k}\to 0\quad \text{uniformly in $c\in
[1,10^l]$}
\end{equation}
(the convergence takes place in the weak operator topology). It
follows that
\begin{equation}\label{dr0}
\sigma_{\cT_d}\perp \sigma_{\cT_c}\ast\delta_t
\end{equation}
for all $t\in\R$; indeed, (\ref{dr1}) and (\ref{dr2}) mean
respectively
\[\xi_{u_k}\to 10^{-l}\quad \text{weakly in $L^2(\R,\sigma_{\cT_d})$,}
\]
and
\[\xi_{u_k}\to 0\quad \text{weakly in $L^2(\R,\sigma_{\cT_c})$.}\]
It is easy to see that the latter condition implies
\[\xi_{u_k}\to 0\quad \text{weakly in $L^2(\R,\sigma_{\cT_c}\ast\delta_t)$}\]
for each $t\in\R$, and the mutual singularity~(\ref{dr0}) follows.

Now, in view of~(\ref{dr0}),
$\sigma_{\cT}\perp\sigma_{\cT_{c/d}}\ast\delta_{t/d}$, and since
in~(\ref{dr2}) $c$ can be replaced by $-c$, it follows that
$\sigma_{\cT}\in{\cS}$. It is also clear that $\cS$ is closed
under taking absolutely continuous measures. Since ${\rm
supp}\,\sigma_{\cT}=\R$~\footnote{This fact is well known for
$\Z$-actions, e.g.\ \cite{Na}, Chapter 3, and can be easily
rewritten using special representation of flows. See also the
proof of Theorem~A in \cite{Ri}.}, the result follows
from~(\ref{recall}).
\end{proof}

Recall also the following basic observation.
\begin{lm}\label{og}
Let $\overline{s}=(s_j)_{j\geq 1}$ be a sequence of positive numbers and let
$\overline{g}=(g_j)_{j\geq 1}$ be
a sequence of uniformly bounded continuous functions.
Then the set
\[
\mathcal{W}_{\overline{s},\overline{g}}=\left\{ \nu\in\mathcal{P}(\R)\colon \left(\exists\
{t_n\to \infty}\right)\left( \forall \  j\geq 1\right)\ \ \xi_{
s_j t_n } \to g_j \text{ weakly in } L^2(\R,\nu)\right\}
\]
is $G_\delta$ in $\mathcal{P}(\R)$.
\end{lm}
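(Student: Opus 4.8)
The plan is to reduce the weak‐convergence requirement to a \emph{fixed} countable family of continuous conditions and then exhibit $\mathcal{W}_{\overline{s},\overline{g}}$ as a countable intersection of open sets. First I would fix once and for all a countable family $\{\phi_k:k\geq1\}\subset C_c(\R)$ that is dense in $C_c(\R)$ for the uniform norm. Since each $\nu$ is a probability measure we have $\|\cdot\|_{L^2(\R,\nu)}\leq\|\cdot\|_\infty$, and $C_c(\R)$ is dense in $L^2(\R,\nu)$; hence the linear span of $\{\phi_k\}$ is dense in $L^2(\R,\nu)$ \emph{simultaneously} for every $\nu$. Because the characters $\xi_{s_jt}$ are bounded by $1$ and the $g_j$ by a common constant, a routine approximation argument (using this uniform boundedness) shows that $\xi_{s_jt_n}\to g_j$ weakly in $L^2(\R,\nu)$ if and only if $\int_\R(\xi_{s_jt_n}-g_j)\overline{\phi_k}\,d\nu\to0$ for every $k\geq1$.

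Accordingly I would set
\[
\Psi_{j,k}(\nu,t)=\left|\int_\R\bigl(\xi_{s_jt}-g_j\bigr)\overline{\phi_k}\,d\nu\right|.
\]
The key technical point to verify is that each $\Psi_{j,k}$ is jointly continuous on $\cP(\R)\times\R$. Here the compact support of $\phi_k$ is essential: on that support $|x|\,|\phi_k(x)|$ is bounded, so by (\ref{eq:expy1}) the functions $\xi_{s_jt}\overline{\phi_k}$ converge uniformly as $t\to t_0$, while $(\xi_{s_jt_0}-g_j)\overline{\phi_k}\in C_c(\R)$ is a legitimate test function for the weak topology on $\cP(\R)$ (note $g_j$ is continuous). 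Splitting the difference of integrals into a "$t$–part'' controlled by the uniform convergence and a "$\nu$–part'' controlled by weak convergence against a $C_c$ function yields joint continuity.

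With this in hand I would rewrite the defining condition of $\mathcal{W}_{\overline{s},\overline{g}}$ entirely in terms of the $\Psi_{j,k}$, claiming that $\nu\in\mathcal{W}_{\overline{s},\overline{g}}$ if and only if
\[
\forall\,J,p,M\geq1\ \ \exists\,t>M:\quad \max_{1\leq j,k\leq J}\Psi_{j,k}(\nu,t)<\tfrac1p .
\]
The forward implication is immediate from the existence of a sequence $t_n\to\infty$ realizing the weak limits. For the converse I would extract a \emph{single} sequence by a diagonal choice: taking $J=p=M=n$ produces $t_n>n$ with $\max_{j,k\leq n}\Psi_{j,k}(\nu,t_n)<1/n$, so $t_n\to\infty$ and, for each fixed pair $(j,k)$, $\Psi_{j,k}(\nu,t_n)<1/n\to0$ once $n\geq\max(j,k)$, which is exactly the required weak convergence for all $j$ along one common sequence. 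Finally, for fixed $J,p,t$ the set $\{\nu:\max_{j,k\leq J}\Psi_{j,k}(\nu,t)<1/p\}$ is open by the joint continuity above, hence so is its union over $t>M$; intersecting over the countably many triples $(J,p,M)\in\N^3$ displays $\mathcal{W}_{\overline{s},\overline{g}}$ as a $G_\delta$ set.

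The genuinely delicate steps are exactly two: the uniform reduction to the fixed countable test family $\{\phi_k\}$, which must serve all measures $\nu$ at once and rests on the uniform boundedness of the integrands; and the diagonalization producing one sequence $t_n\to\infty$ that works for all indices $j$ simultaneously (rather than a separate sequence per $j$). The remaining bookkeeping—openness of the basic sets and the $G_\delta$ conclusion—is then formal, the only place requiring care being the compact support of $\phi_k$ in the joint-continuity verification.
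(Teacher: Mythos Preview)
Your proposal is correct and follows essentially the same strategy as the paper: fix a countable family of bounded continuous test functions dense in every $L^2(\R,\nu)$, use the uniform boundedness of $\xi_{s_jt}$ and $g_j$ to reduce weak convergence to convergence against that family, and then display $\mathcal{W}_{\overline{s},\overline{g}}$ as a countable intersection of open sets by quantifying over times going to infinity. The only cosmetic differences are that the paper packages all indices into a single weighted series $\sum_{m,j}2^{-m-j}\bigl|\int(\xi_{s_jn}-g_j)f_m\,d\mu\bigr|$ and runs over integer times $n\geq m$, whereas you take a finite $\max_{j,k\leq J}$ over real times $t>M$ with an explicit diagonal extraction; your citation of~(\ref{eq:expy1}) should really be the elementary Lipschitz bound $|e^{i\theta}-1|\leq|\theta|$, but the intended uniform estimate on the compact support of $\phi_k$ is of course valid.
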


\begin{proof}
Let $(f_m)_{m\geq 1}$ be a sequence of continuous functions on
$\R$ uniformly bounded by $1$, which is linearly dense in
$L^2(\R,\nu)$ for every $\nu\in \cP(\R)$. Set
\[
\mathcal{R}(n,\vep)=
\left\{\mu\in \cP(\R):\ \sum_{m,j\geq 1}\frac{1}{2^{m+j}}
\left|\int_{\R}\left(e^{2\pi i s_j n x}- g_j(x)\right) f_m(x) \ d\mu(x)\right|<\vep\right\}.
\]
The set $\mathcal{R}(n,\vep)$ is open.
To complete the proof it suffices to notice that
\[
\mathcal{W}_{\overline{s},\overline{g}}=\bigcap_{\Q\ni \vep>0}\bigcap_{m\geq 1}\bigcup_{n\geq m} \mathcal{R}(n,\vep).
\]
\end{proof}

\begin{lm}\label{joa}
Let $H\subset \R^\ast_+$  be a countable multiplicative subgroup.
Then for a typical $\nu \in \cP(\R)$ the measure
$\eta:=\sum_{h\in H}a_h\nu_h$ (with $a_h>0$ and $\sum_{h\in
H}a_h=1$) yields a Gaussian flow $\cT^{\eta|_{\R_+}}$ with simple spectrum.
\end{lm}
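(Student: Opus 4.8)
The plan is to establish simpleness of the spectrum by a Baire--category argument based on the Wiener chaos decomposition. Write $\rho:=\widetilde{\eta|_{\R_+}}$ for the symmetric spectral measure carried by the first chaos $\mathcal H^{(1)}$; for a typical $\nu$ the measure $\nu$, hence $\rho$, is continuous, so $\cT^{\eta|_{\R_+}}$ is weakly mixing. Since the $n$-th chaos $\mathcal H^{(n)}$ is the symmetric $n$-th tensor power of $\mathcal H^{(1)}$, its maximal spectral type is the convolution $\rho^{*n}$, and the spectral multiplicity on $\mathcal H^{(n)}$ is governed, for $\rho^{*n}$-almost every $z$, by the disintegration of $\rho^{\otimes n}$ over the summation map $(x_1,\dots,x_n)\mapsto x_1+\dots+x_n$: simpleness on $\mathcal H^{(n)}$ means that this conditional measure is, up to permutation of the coordinates, concentrated at a single point. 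Thus $\cT^{\eta|_{\R_+}}$ has simple spectrum precisely when (i) the measures $\rho^{*n}$ ($n\ge1$) are pairwise mutually singular, and (ii) for every $n$ the above disintegration is a single permutation orbit $\rho^{*n}$-a.e. I would produce a comeager set of $\nu$ realising both (i) and (ii).

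For (i) I would use rigidity. Fix a real number $c$ with $0<|c|<1$, enumerate $H=\{h_1,h_2,\dots\}$, and apply Lemma~\ref{og} with $\overline s=(h_j)_{j\ge1}$ and $\overline g\equiv c$. The set $\mathcal W_{\overline s,\overline g}$ is $G_\delta$, and for $\nu\in\mathcal W_{\overline s,\overline g}$ there is $t_n\to\infty$ with $\xi_{h t_n}\to c$ weakly in $L^2(\R,\nu)$ for every $h\in H$; as $\rho=\sum_{h\in H}a_h\,\widetilde{(\nu|_{\R_+})_h}$ and $c$ is real, this yields $\xi_{t_n}\to c$ weakly in $L^2(\R,\rho)$. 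Testing against products then gives $U_{T^{\eta|_{\R_+}}_{t_n}}\to c^{\,n}$ weakly on each $\mathcal H^{(n)}$. If $\rho^{*m}\not\perp\rho^{*n}$ for some $m\ne n$ there is a nonzero operator $W$ intertwining the restrictions of the Koopman representation to $\mathcal H^{(m)}$ and $\mathcal H^{(n)}$; passing to the weak limit in $W U_{t}|_{\mathcal H^{(m)}}=U_{t}|_{\mathcal H^{(n)}}W$ forces $c^{\,m}W=c^{\,n}W$, hence $W=0$, a contradiction. This proves (i) on $\mathcal W_{\overline s,\overline g}$. Denseness of $\mathcal W_{\overline s,\overline g}$ I would obtain as in the proof of Lemma~\ref{sw11}: construct a single measure of full support carrying a rigidity sequence that works simultaneously for all $h\in H$ (scale one rigidity sequence by the countable group $H$), and then spread it by~(\ref{recall}).

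For (ii) the key point is that the additive relations inside $H$ --- exactly the reason $\mathrm{supp}(\rho)$ need not be $\Q$-independent --- do not affect the multiplicity. A relation $h_1+h_2=h_3+h_4$ creates coincidences $h_1s+h_2s=h_3s+h_4s$ only through tuples built from a common ``source point'' $s$, and such tuples lie on the lines $\{x_i/x_j\in\pm H\}$, which are $\rho^{\otimes n}$-null since $\rho$ is continuous. Hence, $\rho^{*n}$-a.e., every representation of a given $z$ uses pairwise $\pm H$-independent source points, and any coincidence between two genuinely different representations is a nontrivial linear relation among independent, ``generic'' points of $\mathrm{supp}(\nu)$. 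I would rule these out for a comeager set of $\nu$, showing that the disintegration is a single permutation orbit a.e.

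The main obstacle is precisely this last step: turning ``the summation disintegration is a single atom almost everywhere'' into a countable intersection of open dense conditions on $\nu$, and in particular excluding the Cantor-type phenomenon in which a continuous singular measure has non-atomic convolution fibres (in which case $\mathcal H^{(n)}$ would have infinite multiplicity). Once (i) and (ii) are secured on their respective comeager sets, intersecting them with the dense $G_\delta$ set of continuous measures yields a comeager set of $\nu$ for which $\cT^{\eta|_{\R_+}}$ has simple spectrum.
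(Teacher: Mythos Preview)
Your reduction to the two conditions (i) and (ii) is correct: simple spectrum of a Gaussian flow is equivalent to pairwise singularity of the convolution powers $\rho^{*n}$ together with the ``SC property'' that the disintegration of $\rho^{\otimes n}$ over the summation map is $\rho^{*n}$-a.e.\ a single $S_n$-orbit. Your argument for (i) via a partial-rigidity constant $c$ is sound once one has a full-support witness; the parenthetical ``scale one rigidity sequence by the countable group $H$'' is, however, not a construction, and you would in fact need a flow for which $T_{ht_n}\to c\cdot I$ holds \emph{simultaneously} for all $h\in H$ along the same sequence $t_n$ --- this does not follow from rigidity of $T$ alone.

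The real gap is (ii), and your heuristic does not close it. Discarding the $\rho^{\otimes n}$-null lines $\{x_i/x_j\in\pm H\}$ removes only coincidences coming from internal additive relations of $H$; it says nothing about the structure of the fibres of the summation map over the remaining full-measure set. The Cantor-type phenomenon you mention --- continuous singular $\rho$ with non-atomic convolution fibres --- is not detected by any countable family of polynomial relations among support points, so there is no evident way to express ``the fibre is a single orbit'' as a countable intersection of open conditions on $\nu$. In short, you have not produced a $G_\delta$ description of (ii), and the heuristic does not supply one.

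The paper proceeds differently and avoids the decomposition (i)/(ii) altogether. It imports from Danilenko--Ryzhikov three families of weak-limit conditions on $\nu$ (their a$'$), b$'$), c$'$)): roughly, $\xi_{sn_k}\to\xi_{\sqrt2 s}$, $\xi_{sn_k}\to\tfrac1q+\tfrac{q-1}{q}\xi_s$, and a mixed family where along one sequence the limits at different $s_l\in H$ are $\tfrac1{2k}$ except for one index where the limit is $\tfrac1{2k}\xi_{s_{l_0}}$. These conditions, taken together, are exactly what the proof of Theorem~4.4 in \cite{Da-Ry} uses to force simple spectrum of $\cT^{(\sum a_h\nu_h)|_{\R_+}}$ directly --- they handle both the inter-chaos singularity and the intra-chaos multiplicity in one stroke. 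Each condition is of the form covered by Lemma~\ref{og}, hence $G_\delta$; denseness is obtained not by an ad hoc construction but by exhibiting the maximal spectral type of the rank-$1$ infinite-measure flow built in \cite{Da-Ry}, which satisfies all of a$'$), b$'$), c$'$) and has full topological support, so~(\ref{recall}) applies.
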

\begin{proof}
Set $G=-H\cup H$ and let  $H=\{s_i:\: i\geq 0\}$ ($s_0=1$).
In~\cite{Da-Ry}, Danilenko and Ryzhikov constructed a rank-$1$ flow
$\cT$ preserving a $\sigma$-finite measure $\mu$ (the flow acts on
$\xbm$) such that if $\sigma=\sigma_{\cT}$ denotes its maximal spectral type
on $L^2\xbm$ then the Gaussian flow
\begin{equation}\label{SCSfornu}
\cT^{\left(\sum_{i\geq
1}\frac1{2^i}\sigma_{s_i}\right)|_{\R_+}} \text{ has simple spectrum. }
\end{equation}
To prove this, they used the following properties of $\cT$:
\begin{enumerate}
\item[a)]\label{1-a}
$T_{\sqrt{2}s}  \in WCP(T_s)$~\footnote{An operator $Q$ belongs to
the weak closure of powers ${\rm WCP}(R)$ if for an increasing
sequence $(m_j)$ of integers, $R^{m_j}\to P$ in the weak operator
topology.} for each $s\in H$,
\item[b)]\label{1-b}
$\frac{1}{q}I+\frac{q-1}{q}T_s \in WCP(T_s)$ for each $s\in H$ and
$q\in\N$,
\item[c)]
for each finite  sequence $s_1<s_2<\dots<s_k$ of elements of $H$
and each $1\leq l_0 \leq k$ there exists $t_j \to \infty$ such
that
\begin{enumerate}
\item[(i)]\label{1-c-1}
$T_{t_js_j}\to \frac{1}{2k}I \text{ if }1\leq l\leq k, l\neq l_0,$
\item[(ii)]\label{1-c-2}
$T_{t_js_{l_0}}\to \frac{1}{2k}T_{s_{l_0}}$.
\end{enumerate}
\end{enumerate}
Notice that the  conditions a), b) and c) can be expressed as
follows in terms of weak convergence of continuous and bounded
functions in $L^2(\R,\sigma)$:
\begin{itemize}
\item[a')]
for each $s\in H$ there exists a sequence $n_k\to \infty$ such that
$$\xi_{ s n_k } \to \xi_{\sqrt{2} s },$$
\item[b')]
for each $s\in H$ and $q\in\N$ there exists a sequence $n_k\to\infty$ such that
$$\xi_{s n_k}\to \frac{1}{q}+\frac{q-1}{q}\xi_{s},$$
\item[c')]
for each finite  sequence $s_1<s_2<\dots<s_k$ of elements of $H$
and each $1\leq l_0 \leq k$ there exists $t_j \to \infty$ such
that
\begin{itemize}
\item[(i)]
$\xi_{t_j s_j}\to \frac{1}{2k}$ if $1\leq l\leq k, l\neq l_0$,
\item[(ii)]
$\xi_{t_js_{l_0}}\to \frac{1}{2k}\xi_{s_{l_0}}$.
\end{itemize}
\end{itemize}
The arguments used in the proof of Theorem~4.4 in \cite{Da-Ry}
show that for each continuous probability measure $\sigma$ on $\R$
conditions a'), b') and c') imply the simplicity of spectrum of
the flow  $\cT^{(\sum_{k\geq 1}\frac1{2^k}\sigma_{s_k})|_{\R_+}}$.
Moreover, by Lemma~\ref{og},  the set of measures $\nu\in \cP(\R)$
satisfying these conditions is $G_\delta$. We will show now
that it is also dense in $\cP(\R)$. Notice that conditions a'),
b') and c') hold also in $L^2(\R,\nu)$ for any $\nu\ll \sigma$.
Since $\sigma_{\cT}$ is the maximal spectral type of a rank-$1$
infinite measure-preserving flow $\cT$, the Gelfand spectrum of the
corresponding Koopman representation is equal to $\R$. It follows that
the topological support of $\sigma_{\cT}$ is full and therefore the
result follows from \eqref{recall}.
\end{proof}

\section{Proofs of theorems}
\begin{proof}[Proof of Theorem~\ref{km10}]
(based on Lemmas~\ref{km101} and~\ref{sw11}.) Using these two
lemmas,  for a ``typical'' (continuous, Kronecker) measure $\sigma
\in\cP([a,b])$ we have (with $a_h>0$, and $\sum_{h\in H}a_h=1$)
$$
-H\cup H\subset I(\cT^\eta),$$
where $\eta:=\sum_{h\in H}a_h\sigma_h$ is a Kronecker measure
and moreover
\begin{equation}\label{oo}
\sigma_s\perp \sigma\ast\delta_t
\end{equation}
for each non-zero real $s\neq1$ and arbitrary $t\in\R$. All we
need to show is that when $s\notin -H\cup H$ then
$\eta_s\not\equiv \eta$. However if $s\notin H$ then even more is
true: $\eta\perp\eta_s\ast\delta_t$ for arbitrary $t\in\R$ and
$s\notin\{0,1\}$. It follows that
$$
\widetilde{\eta}_s\perp\widetilde{\eta}\ast\delta_t$$ for each
$s\notin-H\cup H$ and $t\in\R$. In view of Theorem~\ref{lpt}, it
follows that  $\cT^\eta$ is disjoint from $\cT^{\eta_s}$
(isomorphic to $\cT^{\eta}_s$) for $s\notin -H\cup H$. In
particular, $-H\cup H= I(\cT^\eta)$ and the result follows.
\end{proof}

\begin{proof}[Proof of Theorem~\ref{km10}]
(based on Lemma~\ref{km102}.) Given $H\subset\R^\ast_+$ a
multiplicative subgroup which is an additively  $\Q$-independent
set, in \cite{Fr-Le}, there is a construction of a perfect compact
set $K$ such that $\widehat{K}:=\bigcup_{h\in H}hK$ is independent
and for $\widetilde{K}:=-\widehat{K}\cup\widehat{K}$ the following
holds: $(r\widetilde{K}+t)\cap \widetilde{K}$ is countable
whenever $|r|\notin H$ and $t\in\R$ is arbitrary. Using
Lemma~\ref{km102} find a (continuous, Kronecker) measure
$\sigma\in\cP(K)$ such that $\eta:=\sum_{h\in H}a_h\sigma_h$ is a
Kronecker measure. Then $\eta$ is concentrated on $\widehat{K}$. All
we need to show is that if $|r|\notin H$, then the symmetrization
of $\eta_r$ is not equivalent to the symmetrization of $\eta$.
This is however clear, since the symmetrization of $\eta_r$ is a
continuous measure concentrated on $r\widetilde{K}$. As in the
previous proof we deduce  that for $s\notin -H\cup H$ we obtain
disjointness of the corresponding flows.
\end{proof}

\begin{proof}[Proof of Theorem~\ref{mkj}]
First notice that directly from Lemma~\ref{krzys}, it follows that whenever $\sigma$
is a Kronecker measure then for each $r_1,r_2\in\Q^\ast$, $r_1\neq
r_2$, we have
\begin{equation*}\label{ma20}
\sigma_{r_1}\perp\sigma_{r_2}\ast\delta_t\quad \text{for each
}\quad  t\in\R.
\end{equation*}
It follows
that
$\widetilde{\sigma}_{r_1}\perp\widetilde{\sigma}_{r_2}\ast\delta_t$
for all $t\in\R$, so by Theorem~\ref{lpt}, the Gaussian-Kronecker
flows $\cT^{\sigma_{r_1}}$ and $\cT^{\sigma_{r_2}}$ are disjoint.
In view of \eqref{disj}, it follows that $T^{\sigma_{r_1}}_1\perp
T^{\sigma_{r_2}}_1$, thus $T^{\sigma}_{r_1}\perp
T^{\sigma}_{r_2}$.

Now suppose that $T=T_\sigma:\xbm\to\xbm$ is a Gaussian-Kronecker automorphism,
i.e. $\sigma=\sigma_0+\overline{\sigma}_0$ for a continuous
Kronecker measure $\sigma_0\in\cP(\T)$.\footnote{$\T$ stands for $\{z\in\C\colon |z|=1\}$.} Denote by $\sigma'$ the
image of $\sigma_0$ via the map $\T\ni z\mapsto
\operatorname{Arg}(z)/2\pi\in[0,1)$. Then $\sigma'$ is a
continuous Kronecker measure on $\R$ such that
$(\xi_1)_*\widetilde{\sigma}'=\sigma$ and
$\widetilde{\sigma}'*\delta_m\perp \widetilde{\sigma}'$ for all
$m\in\N$. Denote by $\mathcal{H}$ the Gaussian space of the flow
$\cT^{\sigma'}$. Then the Koopman operator of $T^{\sigma'}_1$ has
simple spectrum on $\mathcal{H}$ and its spectral type is
$(\xi_1)_*\widetilde{\sigma}'=\sigma$, see Appendix in
\cite{Le-Pa:conv}. Since the spectral type of $\zeta_1$ (with
respect to $T^{\sigma'}_1$) is
$(\xi_1)_*\widetilde{\sigma}'=\sigma$, it follows that $\zeta_1\circ
(T^{\sigma'}_1)^n$, $n\in \Z$, span the space $\mathcal{H}$.
Thus $T^{\sigma'}_1$ is isomorphic to $T_{\sigma}$. By the first
assertion of the theorem, it follows that $T^n_{\sigma}$ is disjoint
from $T^m_{\sigma}$ for any pair of distinct natural numbers.

In order to prove the second part  of the theorem note that if $s$
is irrational then the set $\{1,s\}$ is $\Q$-independent, so by
Lemma~\ref{km101} we can find a (continuous, Kronecker) measure
$\sigma\in\cP([a,b])$ such that $\eta:=\frac12(\sigma+\sigma_s)$
is a Kronecker measure. Since $\sigma_s\ll\eta$ and
$\sigma_s\ll\eta_s$ the Gaussian-Kronecker flows $\cT^\eta$ and
$\cT^{\eta_s}$ have a common non-trivial (Gaussian) factor. Its
time one map is a common non-trivial factor of $T^\eta_1$ and
$T^{\eta_s}_1$ and it remains to notice that the Gaussian
automorphism $T^{\eta_s}_1$ is isomorphic to $T^\eta_s$.
\end{proof}

\begin{proof}[Proof of Theorem~\ref{fs2}]
Let $H=G\cap \R^*_+$ and let $(a_h)_{h\in H}$ be positive numbers
such that $\sum_{h\in H}a_h=1$. By Lemmas~\ref{sw11}, \ref{joa}
and Lemma~\ref{km101} (applied to $H=\{1\}$) combined with
Remark~\ref{rem:restfun}, there exists $\nu'\in\cP_c(\R)$ such that
\begin{itemize}
\item[(i)] $\nu'_s\perp \nu'\ast\delta_t$ for all $s\in \R^*\setminus\{1\}$ and $t\in
\R$;
\item[(ii)] the Gaussian flow $\cT^{(\sum_{h\in H}a_h \nu'_s)|_{\R_+}}$ has simple spectrum
\item[(iii)] $\nu:=\Delta(\nu')\in \cP_c([a,b])$ is a Kronecker measure
\end{itemize}
(in fact, for a ``typical'' $\nu'\in\cP_c(\R)$ the properties (i)-(iii) hold).
Since the conditions (i) and (ii) hold also for any measure
absolutely continuous with respect to $\nu$, the Kronecker measure
$\nu$ satisfies (i) and (ii) as well. Therefore, setting
$\sigma:=\sum_{h\in H}a_h \nu_s$, by (ii), the Gaussian flow
$\cT^{\sigma}$ has simple spectrum. The same argument as in the
proof of Theorem~\ref{km10} shows that (i) together with (ii)
imply $I(\cT^\sigma)=-H\cup H$ and $\cT^{\nu_s}\perp\cT^{\nu_r}$
whenever $|r|\neq|s|$. Each Kronecker measure $\nu_h$, $h\in H$ is
an FS measure so, by Proposition~\ref{lp100}, it follows that
$\sigma=\sum_{h\in H}a_h \nu_s$ is an FS measure~\footnote{We use
here the elementary fact that the $L^2$-limit of a sequence of
Gausian variables remains Gaussian.}, which completes the proof.
\end{proof}

\begin{proof}[Proof of Theorem~\ref{typical}]
The  first part follows  from Lemma~\ref{sw11} along the same
lines as the first proof of Theorem~\ref{km10} (for $H=\{1\}$).

In view of Corollary~2 in \cite{Le-Pa:conv}, a typical flow $\cT$
has the SC property,\footnote{The SC property means that if we set $\sigma=\sigma_{\cT}$ then for each $n\geq 2$ the conditional measures of the disintegration of $\sigma^{\otimes n}$ over $\sigma^{\ast n}$ via the map $\R^n\ni(x_1,\dots,x_n)\mapsto x_1+\dots+x_n\in\R$ are purely atomic with $n!$ atoms.} which is equivalent to the fact that $\cT^{\sigma_{\cT}}$
has simple spectrum. In particular, it implies that $\cT^{\sigma_{\cT}}$ is GAG.

In  order to prove that $\sigma_{\cT}\perp (\sigma_{\cT})_s\ast\delta_r$,
$s\in\R^*\setminus\{1\}$, $r\in\R$ for a typical flow $\cT$ we follow the
proof of Theorem 3.2 from \cite{Da-Ry} (using Lemma~\ref{sw11} and
the existence of a flow satisfying~(\ref{dr0})). Since $\cT^{\sigma_{\cT}}$ is GAG
for a typical flow $\cT$, by Proposition~\ref{lpt}, it follows that $\cT^{(\sigma_{\cT})_s}$
and $\cT^{(\sigma_{\cT})_r}$ are disjoint wherever $|r|\neq|s|$.
\end{proof}

\paragraph{\underline{Question}}
Is there a Kronecker measure $\sigma\in\cP(\R_+)$ such that $I(\cT^\sigma)$ is uncountable?
\\
This question is to be compared with Ryzhikov's question whether there is a weakly mixing, non-mixing flows with uncountable group of self-similarities, see~\cite{Da}, Problem (1).

\end{document}